\newtheorem{thm}{Theorem}[section]
\newtheorem{theorem}[thm]{Theorem}
\newtheorem*{nnthm}{Theorem}
\newtheorem{cor}[thm]{Corollary}
\newtheorem{lemma}[thm]{Lemma}
\newtheorem{prop}[thm]{Proposition}
\theoremstyle{definition}
\theoremstyle{remark}
\newtheorem{remark}[thm]{Remark}
\newtheorem{example}[thm]{Example}
\newcommand{\ds}{\displaystyle}
\title[Embedding Orders into Central Simple Algebras] {Embedding
  Orders into Central Simple Algebras}
\author {Benjamin Linowitz}
\address{Department of Mathematics\\ 
6188 Kemeny Hall\\
Dartmouth College\\
Hanover, NH 03755}
\email[] {benjamin.linowitz@dartmouth.edu}
\urladdr{http://www.math.dartmouth.edu/~linowitz/ }
\author {Thomas R. Shemanske}
\address{Department of Mathematics\\ 
6188 Kemeny Hall\\
Dartmouth College\\
Hanover, NH 03755}
\email[] {thomas.r.shemanske@dartmouth.edu}
\urladdr{http://www.math.dartmouth.edu/~trs/ }
\date{\today}
\newcommand{\B}{{\mathcal B}}
\newcommand{\D}{{\mathcal D}}
\newcommand{\E}{{\mathcal E}}
\newcommand{\f}{{\mathfrak f_{\Omega/\O_K}}}
\newcommand{\ff}{{\mathfrak f}}
\renewcommand{\L}{{\mathcal L}}
\newcommand{\M}{{\mathcal M}}
\newcommand{\fM}{{\mathfrak M}}
\newcommand{\N}{{\mathcal N}}
\newcommand{\fN}{{\mathfrak N}}
\renewcommand{\O}{{\mathcal O}}
\newcommand{\fP}{{\mathfrak P}}
\newcommand{\hL}{{\widehat L}}
\newcommand{\Q}{\mathbb{Q}}
\newcommand{\R}{{\mathcal R}}
\newcommand{\Z}{\mathbb{Z}}
\newcommand{\End}{\mathrm {End}}
\newcommand{\Frob}{\mathrm {Frob}}
\newcommand{\gen}{\mathrm {gen}}
\newcommand{\ord}{\mathrm {ord}}
\newcommand{\diag}{\mathrm {diag}}
\newcommand{\la}{\langle}
\newcommand{\ra}{\rangle}
\begin{document}
\subjclass[2010] {Primary 11R54; Secondary 11S45, 20E42}

\keywords{Order, central simple algebra, affine building, embedding}

\begin{abstract}
  The question of embedding fields into central simple algebras $B$
  over a number field $K$ was the realm of class field theory.  The
  subject of embedding orders contained in the ring of integers of
  maximal subfields $L$ of such an algebra into orders in that algebra
  is more nuanced.  The first such result along those lines is an
  elegant result of Chevalley \cite{Chevalley-book} which says that
  with $B = M_n(K)$ the ratio of the number of isomorphism classes of
  maximal orders in $B$ into which the ring of integers of $L$ can be
  embedded (to the total number of classes) is $[L \cap \widetilde K :
  K]^{-1}$ where $\widetilde K$ is the Hilbert class field of $K$.
  Chinburg and Friedman (\cite{Chinburg-Friedman}) consider arbitrary
  quadratic orders in quaternion algebras satisfying the Eichler
  condition, and Arenas-Carmona \cite{Arenas-Carmona} considers
  embeddings of the ring of integers into maximal orders in a broad
  class of higher rank central simple algebras.  In this paper, we
  consider central simple algebras of dimension $p^2$, $p$ an odd
  prime, and we show that arbitrary commutative orders in a degree $p$
  extension of $K$, embed into none, all or exactly one out of $p$
  isomorphism classes of maximal orders.  Those commutative orders
  which are selective in this sense are explicitly characterized;
  class fields play a pivotal role.  A crucial ingredient of Chinberg
  and Friedman's argument was the structure of the tree of maximal
  orders for $SL_2$ over a local field.  In this work, we generalize
  Chinburg and Friedman's results replacing the tree by the
  Bruhat-Tits building for $SL_p$.
\end{abstract}

\maketitle

% ------------------------------------------- End Other Topmatter
% Information -------------------------------------------

\section{Introduction}\label{sec:introduction} The subject of
embedding fields and their orders into a central simple algebra
defined over a number field has been a focus of interest for at least
80 years, going back to fundamental questions of class field theory
surrounding the proof of the Albert-Brauer-Hasse-Noether theorem as
well as work of Chevalley on matrix algebras.

To place the results of this paper in context, we offer a brief
historical perspective.  A major achievement of class field theory was
the classification of central simple algebras defined over a number
field, and the Albert-Brauer-Hasse-Noether theorem played a pivotal
role in that endeavor.  For quaternion algebras, this famous theorem
can be stated as:

\begin{nnthm}
  Let $B$ be a quaternion algebra over a number field $K$, and let
  $L/K$ be a quadratic extension of $K$.  Then there is an embedding
  of $L/K$ into $B$ if and only if no prime of $K$ which ramifies in
  $B$ splits in $L$.
\end{nnthm}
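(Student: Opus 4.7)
The plan is to reduce the embedding question to one about splitting fields and then to apply the local-global principle for central simple algebras.

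First I would invoke the classical fact that, for a central simple $K$-algebra $B$ of degree $n$, a degree-$n$ extension $L/K$ embeds as a maximal subfield of $B$ if and only if $L$ is a splitting field for $B$, i.e.\ $B \otimes_K L \cong M_n(L)$. Specializing to $n=2$, the statement that $L$ embeds in $B$ is equivalent to $B \otimes_K L \cong M_2(L)$, or equivalently to the triviality of the Brauer class of $B \otimes_K L$ over $L$.

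Next I would apply the Hasse principle for Brauer groups, which is essentially the Brauer-theoretic content of Albert-Brauer-Hasse-Noether: a central simple algebra over a number field is split precisely when each of its completions is split. Thus $B \otimes_K L \cong M_2(L)$ if and only if $(B \otimes_K L) \otimes_L L_w \cong M_2(L_w)$ for every place $w$ of $L$. Writing $v$ for the place of $K$ below $w$, one has $B \otimes_K L_w = B_v \otimes_{K_v} L_w$, so the question becomes purely local.

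The final step is the case-by-case local analysis. If $v$ is unramified in $B$, then $B_v \cong M_2(K_v)$, and base change to any $L_w$ is already $M_2(L_w)$; no obstruction arises. If $v$ ramifies in $B$, then $B_v$ is the unique quaternion division algebra over $K_v$, whose local invariant is $1/2$; a finite extension $F/K_v$ splits $B_v$ if and only if $[F:K_v]$ is even. Hence any proper quadratic extension $L_w/K_v$ splits $B_v$, while $L_w = K_v$ does not. Now $v$ splits in $L$ exactly when $L \otimes_K K_v \cong K_v \times K_v$, in which case both primes $w \mid v$ satisfy $L_w = K_v$ and $B_v$ fails to split at those $w$; conversely, if $v$ is inert or ramified in $L$, the unique prime above $v$ gives a proper quadratic extension $L_w/K_v$ which splits $B_v$. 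Combining these observations yields the claim that $L$ embeds in $B$ if and only if no prime of $K$ ramified in $B$ splits in $L$.

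The main obstacle is really the invocation of the Hasse principle for Brauer groups, which is a deep class-field-theoretic input; once this is granted, the proof reduces to standard bookkeeping in the local theory of quaternion algebras and the classification of quadratic extensions of a local field.
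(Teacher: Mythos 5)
Your proposal is correct, but note that the paper does not prove this statement: it is quoted in the introduction as a classical consequence of the Albert--Brauer--Hasse--Noether theorem, with the general degree-$n$ version attributed to Theorem 32.15 of Reiner and the corollary on p.~241 of Pierce. Your argument (embedding of a degree-$n$ field iff it is a splitting field, then the Hasse principle for the Brauer group, then the local computation that $L_w$ splits the quaternion division algebra $B_v$ iff $[L_w:K_v]$ is even, i.e.\ the local index $m_v$ divides $[L_w:K_v]$) is exactly the route those references take, specialized to $n=2$; the only point worth making explicit is that ``prime'' includes the real places ramified in $B$, which your local-invariant criterion already handles.
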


The quaternion case is fairly straightforward to understand since a
quaternion algebra over a field is either $2\times 2$ matrices over
the field or a (central simple) division algebra.  The field extension
$L/K$ is necessarily Galois, so the term splits is unambiguous.

In the general setting, we have a central simple algebra $B$ of
dimension $n^2$ over a number field $K$. From \cite{Pierce-book} p
236, $L/K$ embeds into $B$ only if $[L:K] \mid n$, and an embeddable
extension of degree $n$ is called a strictly maximal extension.  The
theorem above generalizes as follows.  For a number field $K$, and
$\nu$ any prime of $K$ (finite or infinite), let $K_\nu$ be the
completion with respect to $\nu$ and let $B_\nu =B \otimes_K K_\nu$ be
the local central simple algebra of dimension $n^2$ over $K_\nu$.  The
Wedderburn structure theorem says that $B_\nu \cong
M_{\kappa_\nu}(D_\nu)$ where $D_\nu$ is a central simple division
algebra of dimension $m_\nu^2$ over $K_\nu$, so of course $n^2 =
\kappa_\nu^2 m_\nu^2$.  We say that the algebra $B$ ramifies at $\nu$
iff $m_\nu > 1$, and is split otherwise.  The generalization of the
classical theorem above follows from Theorem 32.15 of
\cite{Reiner-book} and the corollary on p 241 of \cite{Pierce-book}.

\begin{nnthm}
  Let the notation be as above, and suppose that $[L:K] = n$.  Then
  there is an embedding of $L/K$ into $B$ if and only if for each
  prime $\nu$ of $K$ and for all primes $\fP$ of $L$ lying above
  $\nu$, $m_\nu \mid [L_\fP:K_\nu]$.
\end{nnthm}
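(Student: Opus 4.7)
The plan is to reduce the global embedding question to one about splitting fields, then invoke the local--global principle for the Brauer group, and finally perform a local invariant calculation.

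First I would show that, because $[L:K]=n=\sqrt{\dim_K B}$, the field $L$ embeds into $B$ if and only if $B \otimes_K L \cong M_n(L)$, i.e.\ $L$ is a splitting field for $B$. If $\iota\colon L\hookrightarrow B$ is an embedding, then $\iota(L)$ is a strictly maximal commutative subalgebra and so equals its own centralizer; the double centralizer theorem then yields $B \otimes_K L \cong \End_L(B) \cong M_n(L)$. The converse---that any strictly maximal splitting field actually embeds---uses the Wedderburn decomposition $B\cong M_r(D)$ together with the classification of maximal subfields of $D$, which is the content of Theorem~32.15 of \cite{Reiner-book} and the corollary on p.~241 of \cite{Pierce-book}.

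Next, by the fundamental exact sequence
\[
0 \longrightarrow \operatorname{Br}(K) \longrightarrow \bigoplus_{\nu} \operatorname{Br}(K_\nu) \xrightarrow{\ \sum \operatorname{inv}_\nu\ } \Q/\Z \longrightarrow 0
\]
of class field theory, applied with $L$ in place of $K$, the Brauer class $[B\otimes_K L]\in \operatorname{Br}(L)$ vanishes if and only if $[B_\nu \otimes_{K_\nu} L_\fP]=0$ in $\operatorname{Br}(L_\fP)$ for every prime $\fP$ of $L$ lying above each prime $\nu$ of $K$.

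Finally, one translates this local vanishing into the stated divisibility. For each $\nu$, the class $[B_\nu]$ has order exactly $m_\nu$ in $\operatorname{Br}(K_\nu)$, while the restriction map $\operatorname{Br}(K_\nu)\to \operatorname{Br}(L_\fP)$ acts on invariants by multiplication by $[L_\fP:K_\nu]$ (trivially so at archimedean places). Hence $L_\fP$ splits $B_\nu$ exactly when $m_\nu \mid [L_\fP:K_\nu]$, recovering the theorem. I expect the main obstacle to be the converse in the first step---producing an embedding from a splitting isomorphism---which is precisely what is deferred to the cited references; once that is in hand, the remaining Brauer-theoretic steps are essentially formal.
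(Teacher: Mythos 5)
Your proposal is correct and follows essentially the route the paper itself indicates: it reduces the embedding question to $L$ being a splitting field for $B$ (the content of the corollary on p.~241 of \cite{Pierce-book}) and then applies the local--global principle for the Brauer group together with the behavior of Hasse invariants under restriction (the content of Theorem~32.15 of \cite{Reiner-book}). Your write-up merely unpacks those two citations, so there is nothing to criticize beyond noting that the double-centralizer and invariant computations are exactly the standard ones.
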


For example, any extension $L/K$ of degree $n$ will embed in $M_n(K)$
as $m_\nu = 1$ for all $\nu$.  So now we turn to the question of
embedding orders into central simple algebras which is considerably
more subtle.  Perhaps the first important result was due to Chevalley
\cite{Chevalley-book}.

Let $K$ be a number field, $B = M_n(K)$, $L/K$ a field extension of
degree $n$ and we may assume (without loss of generality from above)
that $L \subset B$. Let $\O_L$ be the ring of integers of $L$.  We
know (see p 131 of \cite{Reiner-book}) that $\O_L$ is contained in
some maximal order $\R$ of $B$, but not necessarily all maximal orders
in $B$.  Chevalley's elegant result is:

\begin{nnthm}
  The ratio of the number of isomorphism classes of maximal orders in
  $B$ into which $\O_L$ can be embedded to the total number of
  isomorphism classes of maximal orders is $[\widetilde K\cap L:
  K]^{-1}$ where $\widetilde K$ is the Hilbert class field of $K$.
\end{nnthm}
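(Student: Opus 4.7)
My plan is to use the classical parameterization of isomorphism classes of maximal orders in $M_n(K)$ by the ideal class group $\mathrm{Cl}(K)$, reduce the embedding question to a Steinitz-class calculation, and then invoke Artin reciprocity. The first ingredient is the standard fact that every maximal order in $B = M_n(K)$ has the form $\End_{\O_K}(M)$ for some full $\O_K$-lattice $M \subset K^n$, and that $\End_{\O_K}(M)$ and $\End_{\O_K}(M')$ are $B^\times$-conjugate (equivalently, isomorphic as $\O_K$-orders) if and only if $M$ and $M'$ have the same Steinitz class in $\mathrm{Cl}(K)$. This yields a bijection between isomorphism classes of maximal orders in $B$ and $\mathrm{Cl}(K)$; in particular the type number equals $h_K$.

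Next, I fix any embedding $L \hookrightarrow B$, which makes $K^n$ into a one-dimensional $L$-vector space; identifying it with $L$ itself, the $\O_L$-stable full $\O_K$-lattices in $K^n$ are exactly the fractional $\O_L$-ideals $\mathfrak{a}$. Hence $\End_{\O_K}(M)$ contains the image of $\O_L$ if and only if $M = \mathfrak{a}$ for some such $\mathfrak{a}$. By Skolem--Noether all embeddings $L \hookrightarrow B$ are $B^\times$-conjugate, so the count of isomorphism classes admitting an embedding of $\O_L$ is independent of the chosen embedding and equals the cardinality of the image of the Steinitz map
\[
\Phi : \mathrm{Cl}(L) \longrightarrow \mathrm{Cl}(K), \qquad [\mathfrak{a}] \longmapsto [\mathfrak{a}]_{\O_K}.
\]

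The central calculation, which I expect to be the main obstacle, is to identify $\Phi$ with a translate of the ideal-class norm $\mathrm{Nm}_{L/K} \colon \mathrm{Cl}(L) \to \mathrm{Cl}(K)$. Working locally at each prime $\mathfrak{p}$ of $K$, the completion $\mathfrak{a}_\mathfrak{p}$ is $\O_{L,\mathfrak{p}}$-free of rank one, and a direct determinant computation yields $\det_{\O_{K,\mathfrak{p}}}(\mathfrak{a}_\mathfrak{p}) = \mathrm{Nm}_{L/K}(\mathfrak{a}_\mathfrak{p}) \cdot \det_{\O_{K,\mathfrak{p}}}(\O_{L,\mathfrak{p}})$; globalizing then gives $[\mathfrak{a}]_{\O_K} = \mathrm{Nm}_{L/K}([\mathfrak{a}]) \cdot [\O_L]_{\O_K}$ in $\mathrm{Cl}(K)$. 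Consequently $\mathrm{image}(\Phi) = [\O_L]_{\O_K} \cdot \mathrm{Nm}_{L/K}(\mathrm{Cl}(L))$, whose cardinality equals $|\mathrm{Nm}_{L/K}(\mathrm{Cl}(L))|$. Verifying well-definedness at the level of ideal classes (handling the scaling freedom in choice of representative lattice) and tracking the local-to-global passage carefully is the technical heart.

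Finally, Artin reciprocity applied to the Hilbert class field computes the index of the norm subgroup. Under the isomorphism $\mathrm{Cl}(K) \cong \mathrm{Gal}(\widetilde K / K)$, the subgroup $\mathrm{Nm}_{L/K}(\mathrm{Cl}(L))$ corresponds to $\mathrm{Gal}(\widetilde K / (\widetilde K \cap L))$, because $L\widetilde K / L$ is unramified abelian and the composition
\[
\mathrm{Cl}(L) \longrightarrow \mathrm{Gal}(L\widetilde K / L) \cong \mathrm{Gal}(\widetilde K / (\widetilde K \cap L)) \hookrightarrow \mathrm{Gal}(\widetilde K / K)
\]
agrees with $\mathrm{Nm}_{L/K}$ via the Artin map on Frobenii. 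Hence $[\mathrm{Cl}(K) : \mathrm{Nm}_{L/K}(\mathrm{Cl}(L))] = [\widetilde K \cap L : K]$, so the ratio in Chevalley's theorem is $[\widetilde K \cap L : K]^{-1}$.
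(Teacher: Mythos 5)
The paper does not actually prove this statement---it is quoted as Chevalley's classical theorem---so I am judging your argument on its own merits, using the paper's adelic computations as a cross-check. Your overall strategy (parametrize maximal orders by lattices, reduce to Steinitz classes, prove $\mathrm{St}(\mathfrak{a}) = [N_{L/K}(\mathfrak{a})]\cdot \mathrm{St}(\O_L)$ locally, and finish with the translation theorem for the Artin map) is sound, and your second, third and fourth steps are correct. But your opening ``standard fact'' is false as stated: $\End_{\O_K}(M)$ and $\End_{\O_K}(M')$ can be conjugate without $M$ and $M'$ having the same Steinitz class. Indeed $\End_{\O_K}(M) = \End_{\O_K}(\mathfrak{c}M)$ for every fractional $\O_K$-ideal $\mathfrak{c}$, while $\mathrm{St}(\mathfrak{c}M) = [\mathfrak{c}]^n\,\mathrm{St}(M)$; equivalently, the local normalizer of $M_n(\O_\nu)$ is $K_\nu^\times GL_n(\O_\nu)$, whose determinant group is $(K_\nu^\times)^n\O_\nu^\times$, not $K_\nu^\times\O_\nu^\times$. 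Hence the isomorphism classes of maximal orders in $M_n(K)$ are in bijection with $\mathrm{Cl}(K)/\mathrm{Cl}(K)^n$, not with $\mathrm{Cl}(K)$, and the type number is $|\mathrm{Cl}(K)/\mathrm{Cl}(K)^n|$, generally smaller than $h_K$ (for $n=3$ and $K=\Q(\sqrt{-5})$ there is one class, not two). This is exactly what the paper's Section 3 establishes adelically in the case $n=p$: the reduced norm gives a bijection of the type set with $K^\times\backslash J_K/nr(\fN(\R))$, and $nr(\N(\R_\nu)) = \O_\nu^\times(K_\nu^\times)^p$ at split primes, yielding $\mathrm{Cl}(K)/\mathrm{Cl}(K)^p$.

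Fortunately the error cancels in the ratio, because numerator and denominator are overcounted by the same factor. Since $N_{L/K}(\mathfrak{c}\O_L) = \mathfrak{c}^n$, the norm subgroup $N_{L/K}(\mathrm{Cl}(L))$ contains $\mathrm{Cl}(K)^n$; therefore your set of embeddable Steinitz classes $\mathrm{St}(\O_L)\cdot N_{L/K}(\mathrm{Cl}(L))$ is a union of $\mathrm{Cl}(K)^n$-cosets, descends to a well-defined subset of $\mathrm{Cl}(K)/\mathrm{Cl}(K)^n$, and has relative index $[\mathrm{Cl}(K):N_{L/K}(\mathrm{Cl}(L))]$ there, which your class-field-theoretic step correctly identifies with $[\widetilde{K}\cap L:K]$. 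So the route is essentially viable, but you must replace the bijection with $\mathrm{Cl}(K)$ by one with $\mathrm{Cl}(K)/\mathrm{Cl}(K)^n$ and explicitly invoke the containment $\mathrm{Cl}(K)^n\subseteq N_{L/K}(\mathrm{Cl}(L))$; without that, both your count of the total number of classes and your count of the embeddable ones are wrong, and the well-definedness of ``which classes admit an embedding'' in terms of a single Steinitz class is left unjustified.
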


In the last decade or so, there have been a number of generalizations
of Chevalley's result.  In 1999, Chinburg and Friedman
\cite{Chinburg-Friedman} considered general quaternion algebras
(satisfying the Eichler condition), but arbitrary orders $\Omega$ in
the ring of integers of an embedded quadratic extension of the center,
and proved a ratio of 1/2 or 1 with respect to maximal orders in the
algebra (though the answer is not as simple as Chevalley's).  Chan and
Xu \cite{Chan-XU}, and independently Guo and Qin \cite{Guo-Qin}, again
considered the quaternion algebras, but replaced maximal orders with
Eichler orders of arbitrary level.  Maclachlan \cite{Maclachlan}
considered Eichler orders of square-free level, but replaced
embeddings into Eichler orders with optimal embeddings. The first
author of this paper \cite{Linowitz-selectivity} replaced Eichler
orders with a broad class of Bass orders and considered both
embeddings and optimal embeddings.

The first work beyond Chevalley's in the non-quaternion setting was by
Arenas-Carmona \cite{Arenas-Carmona}.  The setting was a central
simple algebra $B$ over a number field $K$ of dimension $n^2$, $n\ge
3$ with the proviso that the completions of $B$ (at the
non-archimedean primes) have the form (in the notation above) $B_\nu
\cong M_{\kappa_\nu}(D_\nu)$, $n = \kappa_\nu m_\nu$, with $\kappa_\nu
= 1$ or $n$.  He considered embeddings of the ring of integers $\O_L$
of an extension $L/K$ of degree $n$ into maximal orders of $B$ and
proves a result analogous to Chevalley's with the Hilbert class field
replaced by a spinor class field.  His results come out of the theory
of quadratic forms, in particular from his generalization of the
notion of a spinor class field to the setting of a skew-Hermitian
space over a quaternion algebra.

In this paper, we too consider generalizations beyond the quaternionic
case.  We consider the case where $B$ is a central simple algebra
having dimension $p^2$ ($p$ be an odd prime) over a number field $K$;
this part of the setup is of course a special case of the one in
\cite{Arenas-Carmona}.  On the other hand, like Chinburg and Friedman,
we are able to describe the embedding situation for all orders $\Omega
\subset \O_L$ where $L/K$ is a degree $p$ extension.  As in the case
of Chinburg and Friedman, the question of the proportion of the
isomorphism classes of maximal orders into which $\Omega$ embeds is
not simply dependent on a class field as in the case of the maximal
order $\O_L$, but also on the relative discriminant (or conductor) of
$\Omega$, and it is these considerations which have constrained our
consideration to algebras of degree $p^2$.

There are other substantive differences between \cite{Arenas-Carmona}
and this work.  Generalizing the ideas of \cite{Chinburg-Friedman}, we
are able to parametrize the isomorphism classes of maximal orders in
the algebra so as to give an explicit description of those maximal
orders into which $\Omega$ can be embedded, explicit enough to specify
them via the local-global correspondence.  Also as in
\cite{Chinburg-Friedman} we are able to define the notion of a
``distance ideal'' associated to two maximal orders. We use this
distance ideal together with the Artin map associated to $L/K$ to
characterize the isomorphism classes of maximal orders into which
$\Omega$ can be embedded.

Central to the arguments of Chinburg and Friedman are properties of
the tree of maximal orders over a local field (the Bruhat-Tits
building for $SL_2$).  This paper avails itself to the structure of
the affine building for $SL_p$, but introduces new arguments to
replace those where the quaternionic case utilized the structure of
the building as a tree; smaller accommodations are required since the
extension $L/K$ need not be Galois as it is in the quadratic case.

One interesting observation about all the generalizations mentioned
above is that class fields have played a central role.  We now
describe the main result.  Since the question of embeddability of
fields has been answered above, we presume throughout that $L/K$ is a
degree $p$ extension and that $L \subset B$.  Let $\O_K$ denote the
ring of integers of $K$, and let $\Omega$ denote a commutative
$\O_K$-order of rank $p$ in $L$, so necessarily $\Omega$ is an
integral domain with field of fractions equal to $L$. It follows that
$\Omega$ is contained in a maximal order $\R$ of $B$ (see p 131 of
\cite{Reiner-book}), so we fix $\R$ for the remainder of this paper.
Finally, we define the conductor of $\Omega$ as $\f = \{x\in \O_L \mid
x\O_L \subset \Omega\}$ (see \cite{Neukirch-book}).

Via class field theory, we associate an abelian extension $K(\R)/K$ to
our maximal order $\R$. We find that $\Omega$ embeds into all of the
isomorphism classes of maximal orders except when the following two
conditions are satisfied:
\begin{compactenum}
\item $L \subseteq K(\R)$,
\item Every prime ideal $\nu$ of $K$ which divides $N_{L/K}(\f)$
  splits in $L/K$.
\end{compactenum}
When these two conditions hold, $\Omega$ embeds in one- $p$th of the
isomorphism classes of maximal orders, and those classes are
characterized explicitly by means of the Frobenius, $\Frob_{L/K} \in
Gal(L/K)$.

Following \cite{Chinburg-Friedman}, an order $\Omega \subset \R$, but
which does not embed in all maximal orders is called selective.  In
section~\ref{sec:selective_orders}, we give examples and show that a
degree $p$ division algebra admits no selective orders.

\section{Local Results}\label{sec:local_results} We begin with some
results about orders in matrix algebras over local fields which we
will need.  Let $k$ be a non-archimedean local field, with unique
maximal order $\O$, $V$ an $n$-dimensional vector space over $k$, and
identify $\End_k(V)$ with $\B = M_n(k)$, the central simple matrix
algebra over $k$.  The ring $M_n(\O)$ is a maximal order in $\B$ and
can be denoted as the endomorphism ring $\End_\O(\L)$, where $\L$ is
an $\O$-lattice in $V$ of rank $n$. It is well known ((17.3) of
\cite{Reiner-book}) that every maximal order in $\B$ has the form $u
M_n(\O) u^{-1} = \End_\O(u\L)$ for some $u \in \B^\times$, and is it
trivial to check the for another $\O$-lattice $\M$, we have
$\End_\O(\L) = \End_\O(\M)$ iff $\L$ and $\M$ are homothetic: $\L =
\lambda \M$ for some $\lambda \in k^\times$.

It is also the case that the maximal orders in $\B$ are in one-to-one
correspondence with the vertices of the affine building associated to
$SL_n(k)$ (see \S 6.9 of \cite{Abramenko-Brown}, or Chapter 19 of
\cite{Garrett}), and so the vertices may be labeled by homothety
classes of lattices in $V$, see p 148 of \cite{Brown}.  To realize
such a labeling it is convenient to choose a basis $\{\omega_1, \dots,
\omega_n\}$ of $V$.  This basis, actually the lines spanned by the
basis elements, determines an apartment, and each vertex in that
apartment can be identified uniquely with the homothety class of a
lattice of the form $\O \pi^{a_1}\omega_1 \oplus \cdots \oplus
\O\pi^{a_n}\omega_n$, where $\pi$ is the local uniformizer of $k$.
Since the basis and uniformizer are fixed, we shall denote this
homothety class simply by $[a_1, \dots, a_n]$, ($(a_1, \dots, a_n) \in
\Z^n/\Z(1,1,\dots, 1)$).

Let $\fM_1, \fM_2$ be two maximal orders in $\B=M_n(k)$, and write $\fM_i
= \End_\O(\L_i)$ ($i=1,2$) for $\O$-lattices $\L_i$ in $V$. Since
$\End_\O(\L_i)$ does not depend upon the homothety class of $\L_i$, we
may assume without loss that $\L_1 \subseteq \L_2$.  As the lattices
are both free modules over a PID, they have well-defined invariant
factors: $\{\L_2 : \L_1\} = \{ \pi^{a_1}, \dots, \pi^{a_n}\}$, with
$a_i \in \Z$, and $a_1 \le \cdots \le a_n$. Note that $\{\L_1: \L_2\}
= \{ \pi^{-a_n}, \dots, \pi^{-a_1}\}$.  Define the `type distance'
between $\fM_1$ and $\fM_2$ via the $\L_i$ to be congruence class modulo $n$:
\[
td_k(\fM_1,\fM_2) = td_\pi(\fM_1,\fM_2) \equiv \sum_{i=1}^n a_i \pmod n, \textrm{ where } \{\L_2 :
\L_1\} = \{ \pi^{a_1}, \dots, \pi^{a_n}\}.
\]
This definition depends only on the local field, not the choice of
uniformizer.  The motivation for this definition comes from a
consideration of how to label the vertices of a building.  Those in
the building for $SL_n(k)$ have types 0, \dots, $n-1$.  Any given
vertex, say the one corresponding to the homothety class of $\L$, can be
assigned type 0.  Then if $\alpha \in GL_n(k) = \B^\times$, the vertex
associated to the homothety class of $\alpha \L$ has type congruent to
$\ord_\pi(\det \alpha) \pmod n$ (see \cite{Ronan}).

\section{Maximal Orders over Number Fields}
In returning to the global setting, we recall that we are assuming
that $p$ is an odd prime, and $B$ is a central simple algebra having
dimension $p^2$ over a number field $K$.  For a prime $\nu$ of $K$, we
denote by $K_\nu$ its completion at $\nu$ and for $\nu$ a finite
prime, $\O_\nu$ the maximal order of $K_\nu$, and $\pi = \pi_\nu$ a
fixed uniformizer.  We will denote by $J_K$ the idele group of $K$ and by
$J_B$ the idele group of $B$.  We denote by $nr$ the
reduced norm in numerous contexts: $nr: B \to K$, $nr: B_\nu
\to K_\nu$, or $nr: J_B \to J_K$, with any possible ambiguity resolved
by context.

Because the degree of $B$ over $K$ is odd, $B_\nu \cong M_p(K_\nu)$
for every infinite prime $\nu$ of $K$, and since $p$ is prime, for any
finite prime $\nu$ of $K$, $B_\nu$ is either $M_p(K_\nu)$ ($\nu$ is
said to split in $B$) or a central simple division algebra over
$K_\nu$ ($\nu$ is said to ramify in $B$) (see section 32 of
\cite{Reiner-book}).

Given a maximal order $\R \subset B$, and a prime $\nu$ of $K$, we
define localizations $\R_\nu \subset B_\nu$ by:

\[
\R_\nu =
\begin{cases}
  \R \otimes_\O \O_\nu&\textrm{if $\nu$ is finite}\\
  \R \otimes_\O K_\nu = B_\nu&\textrm{if $\nu$ is infinite}\\
\end{cases}
\]

We will also be interested in the normalizers of the local orders, as
well as their reduced norms.  Let $\N(\R_\nu)$ denote the normalizer
of $\R_\nu$ in $B_\nu^\times$.  When $\nu$ is an infinite prime,
$\N(\R_\nu) = B_\nu^\times$ and $nr(\N(\R_\nu)) = K_\nu^\times$.  If
$\nu$ is finite, we have two cases: If $\nu$ splits in $B$, then
$B_\nu \cong M_p(K_\nu)$ and every maximal order is conjugate by an
element of $B_\nu^\times$ to $M_p(\O_\nu)$, so every normalizer is
conjugate to $GL_p(\O_\nu) K_\nu^\times$ (37.26 of
\cite{Reiner-book}), while if $\nu$ ramifies in $B$, $\R_\nu$ is the
unique maximal order of the division algebra $B_\nu$
\cite{Reiner-book}, so $\N(\R_\nu) = B_\nu^\times$.  It follows that
for $\nu$ split, $nr(\N(\R_\nu)) = \O_\nu^\times (K_\nu^\times)^p$,
while for $\nu$ ramified p 153 of \cite{Reiner-book} gives that
$nr(\N(\R_\nu)) = nr(B_\nu^\times) = K_\nu^\times.$

\subsection{Type Numbers of Maximal Orders}

We say that two orders $\R$ and $\E$ in $B$ are in the same genus if
$\R_\nu \cong \E_\nu$ for all (finite) primes $\nu$ of $K$.  By the
Skolem-Noether theorem, this means they are locally conjugate at all
finite primes.  Denote by $\gen(\R)$ the genus
of $\R$, the set of orders $\E$ in $B$ which are in the same
genus as $\R$.  Again by Skolem-Noether, $\gen(\R)$ is the disjoint
union of isomorphism classes.  The type number of $\R$, $t(\R)$,
is the number of isomorphism classes in $\gen(\R)$.

By Theorem 17.3 of \cite{Reiner-book}), any two maximal orders in $B$
are everywhere locally conjugate, so the genus of maximal orders is
independent of the choice of representative.  So if $\R$ is any
maximal order in $B$, the type number of $\R$ is simply the number of
isomorphism classes of maximal orders in $B$.  The question we answer
is into how many of the isomorphism classes of maximal orders can an
order $\Omega$ be embedded?  Notice that if $\Omega$ embeds into one
maximal order in an isomorphism class it embeds into all, since any
two elements of an isomorphism class are (globally) conjugate.

Adelically, the genus of an order $\R$ is characterized by the coset space
$J_B/\fN(\R)$, where $\fN(\R) = J_B \cap \prod_\nu \N(\R_\nu)$ where
$\N(\R_\nu)$ is the normalizer of $\R_\nu$ in $B_\nu^\times$.  The type number
of $\R$ is the cardinality of the double coset space $B^\times\backslash
J_B/\fN(\R)$.  To make use of class field theory, we need to realize
this quotient in terms of the arithmetic of $K$.  

Henceforth, let $\R$ be a maximal order in $B$. We prove 

\begin{theorem}\label{thm:type_number_bijection}The reduced norm on
  $B$ induces a bijection 
\[
nr: B^\times\backslash J_B/\fN(\R) \to K^\times \backslash J_K/nr(\fN(\R)).
\]  
\end{theorem}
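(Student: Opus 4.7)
The plan is to establish well-definedness, surjectivity, and injectivity of the induced map in turn. Well-definedness is immediate since $nr$ carries $B^\times$ into $K^\times$ and $\fN(\R)$ into $nr(\fN(\R))$. For surjectivity it suffices to check that $nr \colon J_B \to J_K$ is itself onto, which follows placewise: at the infinite and finite split places $B_\nu \cong M_p(K_\nu)$ and $nr$ is the determinant; at the finite ramified places the identity $nr(B_\nu^\times) = K_\nu^\times$ was recorded in the section above.

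For injectivity, suppose $x_1, x_2 \in J_B$ satisfy $nr(x_1) = \alpha \cdot nr(x_2) \cdot \beta$ for some $\alpha \in K^\times$ and $\beta \in nr(\fN(\R))$. Choose $y \in \fN(\R)$ with $nr(y) = \beta$. Because $p$ is odd, $B$ is split at every archimedean place of $K$, so the Hasse--Schilling--Maass theorem furnishes $b \in B^\times$ with $nr(b) = \alpha$. Setting $w := b^{-1} x_1 y^{-1} x_2^{-1}$, a direct computation gives $nr(w) = 1$, so $w$ lies in the norm-one subgroup $J_B^1 := \ker(nr \colon J_B \to J_K)$. The task reduces to decomposing $w$ in such a way that substitution into $x_1 = b\, w\, x_2\, y$ exhibits $x_1$ as an element of $B^\times\, x_2\, \fN(\R)$.

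This decomposition is the decisive step and is where I expect the principal difficulty. Because $B$ satisfies the Eichler condition (it splits at some archimedean place $\nu_0$), the group $SL_1(B)$ satisfies strong approximation, which yields $J_B^1 = B^1 \cdot U$ for any open subgroup $U \subseteq J_B^1$ containing $B_{\nu_0}^1$. The naive choice $U = \fN(\R) \cap J_B^1$ does not work directly, because the resulting factor is then separated from $x_2$ in the substitution by a conjugation that need not preserve $\fN(\R)$. The remedy is to apply strong approximation with the conjugated open subgroup
\[
U := x_2\, (\fN(\R) \cap J_B^1)\, x_2^{-1},
\]
which is still an open subgroup of $J_B^1$ containing $B_{\nu_0}^1$, since $(x_2)_{\nu_0}$ normalizes the kernel of the local reduced norm. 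This produces $w = b_1 \cdot x_2\, m\, x_2^{-1}$ with $b_1 \in B^1$ and $m \in \fN(\R)$, and substituting yields
\[
x_1 \;=\; b\, w\, x_2\, y \;=\; b\, b_1 \cdot x_2\, m\, x_2^{-1} \cdot x_2\, y \;=\; (b b_1)\, x_2\, (m y) \;\in\; B^\times\, x_2\, \fN(\R),
\]
completing the injectivity.
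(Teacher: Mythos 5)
Your proof is correct and follows essentially the same route as the paper's: Hasse--Schilling--Maass removes the $K^\times$ factor, and the resulting norm-one idele is absorbed via strong approximation --- the paper outsources precisely your conjugated-subgroup step to Fr\"ohlich's result that $J_B^1 \subset B^\times \tilde\gamma\,\fN(\R)\,\tilde\gamma^{-1}$ for every $\tilde\gamma \in J_B$, whereas you derive it directly, correctly checking that $x_2(\fN(\R)\cap J_B^1)x_2^{-1}$ is open in $J_B^1$ and contains $B^1_{\nu_0}$. The one gloss is in surjectivity: placewise surjectivity of $nr$ does not by itself yield surjectivity onto the restricted product $J_K$ --- at the almost-all places where $a_\nu\in\O_\nu^\times$ and $\R_\nu\cong M_p(\O_\nu)$ you must choose the preimage inside $\R_\nu^\times$ (possible since $\det:GL_p(\O_\nu)\to\O_\nu^\times$ is onto), which is exactly what the paper's $\diag(a_\nu,1,\dots,1)$ construction accomplishes.
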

\begin{proof}The map is defined in the obvious way with
$nr(B^\times \tilde \alpha\, \fN(\R)) = K^\times nr(\tilde \alpha)\
nr(\fN(\R))$ and where $nr((\alpha_\nu)) = (nr(\alpha_\nu))$.
  We observed above that no infinite prime of $K$
  ramifies in $B$, so it follows from Theorem 33.4
  of \cite{Reiner-book}, that $nr(B_\nu^\times) = K_\nu^\times$ for all primes of
  $K$, including the infinite ones.  Let $\tilde a = (a_\nu)
  \in J_K$, and $K^\times \tilde a\ nr(\fN(\R))$ be an element of
  $K^\times \backslash J_K/nr(\fN(\R))$.  We construct an idele
  $\tilde \beta = (\beta_\nu) \in J_B$ so that $B^\times \tilde
  \beta \ \fN(\R) \mapsto K^\times \tilde a\, nr(\fN(\R))$. For
  all but finitely many non-archimedean primes $\nu$ of $K$,
  $a_\nu\in \O_\nu^\times$ and $\R_\nu \cong M_p(\O_\nu)$.  Define
  $\beta_\nu$ to be the conjugate of the diagonal matrix
  $\diag(a_\nu, 1, \dots, 1)$ which is contained in $\R_\nu$.
For the other primes, using the local surjectivity of the reduced norm
described above, let $\beta_\nu$ be any preimage of in $B_\nu^\times$ of
$a_\nu$.  The constructed element $\tilde \beta$ is trivially
seen to be in $J_B$ and given the invariance of the reduced norm under
conjugation, we see that $nr(\tilde \beta) = \tilde a$ which
establishes surjectivity.

For injectivity we first need a small claim: that the preimage of $
K^\times nr(\fN(\R))$ under the reduced norm is $B^\times J_B^1
\fN(\R)$, where $J_B^1$ is the kernel of the reduced norm map $nr:J_B
\to J_K$.  It is obvious that $B^\times J_B^1 \fN(\R)$ is contained in
the kernel.  Let $\tilde \gamma \in J_B$ be such that $nr(B^\times
\tilde \gamma\ \fN(\R)) \in K^\times nr(\fN(\R))$. Then
$nr(\tilde\gamma) \in K^\times nr(\fN(\R))$, so write
$nr(\tilde\gamma) = k \cdot nr(\tilde r)$ for $\tilde r \in \fN(\R)$.
Again noting that no infinite prime of $K$ ramifies in $B$, the
Hasse-Schilling-Maass theorem (Theorem 33.15 of \cite{Reiner-book})
implies there exists a $b \in B^\times$ with $nr(b) = k$.  Thus
$nr(\tilde \gamma) = nr(b)\cdot nr(\tilde r)$, hence $nr(b^{-1})\cdot
nr(\tilde\gamma)\cdot nr(\tilde r^{-1})= (1) \in J_K$ which implies
$B^\times \tilde \gamma\ \fN(\R) = B^\times b^{-1} \tilde \gamma
\,\tilde r^{-1}\ \fN(\R) \in B^\times J_B^1 \fN(\R)$ as claimed.

To continue with injectivity, suppose that there exist $\tilde \alpha,
\tilde \beta \in J_B$ with $nr (B^\times \tilde \alpha\, \fN(\R)) = nr
(B^\times \tilde \beta\, \fN(\R))$. Then $K^\times nr(\tilde\alpha)
nr(\fN(\R)) = K^\times nr(\tilde\beta)\ nr(\fN(\R))$ which implies
$nr(\tilde\alpha^{-1}\tilde \beta) \in K^\times nr(\fN(\R))$. By the
claim, we have that $\tilde\alpha^{-1}\tilde \beta \in B^\times J_B^1
\fN(\R)$.  As above, it is easy to check that $B^\times J_B^1$ is a
normal subgroup of $J_B$, being the kernel of the induced homomorphism
$nr: J_B \to J_K/K^\times$, so that $\tilde \beta \in \tilde \alpha
B^\times J_B^1 \fN(\R) = B^\times J_B^1 \tilde\alpha \fN(\R)$. By
VI.iii and VII of \cite{Frohlich-lf}, we have that $J_B^1 \subset
B^\times \tilde \gamma \fN(\R) \tilde\gamma^{-1}$ for any $\tilde \gamma
\in J_B$, so choosing $\tilde \gamma = \tilde \alpha$, we have
\[
\tilde\beta \in  B^\times J_B^1 \tilde\alpha \fN(\R) \subseteq 
B^\times \tilde\alpha \fN(\R),
\]
so $B^\times \tilde \beta \fN(\R) \subseteq B^\times \tilde\alpha
\fN(\R)$, and by symmetry, we have equality.
\end{proof}

While it is well-known that the type number is finite (the type number
of an order is trivially bounded above by its class number and the class number
is finite (26.4 of \cite{Reiner-book})), we establish a stronger result
in our special case of central simple algebras of dimension $p^2$ over
$K$.  We show that the type number is a power of $p$; more
specifically, we show that

\begin{theorem}\label{thm:genus_p_group}Let $\R$ be a maximal order in
  a central simple algebra of dimension $p^2$ over a number field
  $K$.  Then the group $ K^\times \backslash J_K/nr(\fN(\R))$ is an
  elementary abelian group of exponent $p$.
\end{theorem}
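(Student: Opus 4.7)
The plan is to compute $nr(\fN(\R))$ place-by-place using the local descriptions already established in Section 3, show that each local quotient $K_\nu^\times / nr(\N(\R_\nu))$ is either trivial or cyclic of order $p$, and then conclude that the global quotient must have exponent dividing $p$.

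First I would recall the local calculations already done in the paper. For an infinite prime $\nu$ we have $nr(\N(\R_\nu)) = K_\nu^\times$; for a finite prime $\nu$ ramifying in $B$ we again have $nr(\N(\R_\nu)) = K_\nu^\times$; and for a finite prime $\nu$ that splits in $B$ we have $nr(\N(\R_\nu)) = \O_\nu^\times (K_\nu^\times)^p$. Consequently the local quotient $K_\nu^\times / nr(\N(\R_\nu))$ is trivial at all infinite and ramified primes, and at a split finite prime $\nu$ it is $K_\nu^\times / \O_\nu^\times (K_\nu^\times)^p$, which, via $\ord_\pi$, is isomorphic to $\Z/p\Z$.

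Next I would verify that $nr(\fN(\R))$ is the restricted direct product $\prod_\nu' nr(\N(\R_\nu))$ with respect to $\O_\nu^\times$. Since $\fN(\R)$ consists of those elements of $J_B$ whose $\nu$-component lies in $\N(\R_\nu)$ for every $\nu$, and for almost all $\nu$ the component of an idele of $B$ lies in $\R_\nu^\times \subseteq \N(\R_\nu)$, taking reduced norms gives a restricted product relative to $nr(\R_\nu^\times) = \O_\nu^\times \subseteq nr(\N(\R_\nu))$. Therefore
\[
J_K / nr(\fN(\R)) \;\cong\; \bigoplus_{\nu \text{ split in } B} K_\nu^\times / \O_\nu^\times (K_\nu^\times)^p \;\cong\; \bigoplus_{\nu \text{ split in } B} \Z/p\Z,
\]
which is manifestly an (elementary) abelian group of exponent $p$.

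Finally, since $K^\times \backslash J_K / nr(\fN(\R))$ is a quotient of $J_K / nr(\fN(\R))$ by the image of $K^\times$, and any quotient of an elementary abelian $p$-group is again elementary abelian of exponent $p$, the result follows. There is no real obstacle here; the only point requiring care is the restricted-product interpretation of $nr(\fN(\R))$, which is what justifies decomposing the global quotient as a direct sum of the local ones.
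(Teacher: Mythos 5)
Your proposal is correct and follows essentially the same route as the paper: compute $K_\nu^\times/nr(\N(\R_\nu))$ locally (trivial at infinite and ramified primes, $\Z/p\Z$ at split primes), conclude $J_K/nr(\fN(\R))$ has exponent $p$, and pass to the further quotient by $K^\times$. Your extra care with the restricted-product interpretation of $nr(\fN(\R))$ is a point the paper leaves implicit, but it is not a different argument.
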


\begin{proof}
Consider the quotient $J_K/nr(\fN(\R))$.  Each factor in the product
has the form $K_\nu^\times /nr(\N(R_\nu))$.  From above, we see that
this quotient is trivial when $\nu$ is infinite or finite and
ramified.  For finite split primes, $K_\nu^\times /nr(\N(R_\nu)) =
K_\nu^\times/(\O_v^\times (K_\nu^\times)^p)\cong \Z/p\Z$. So it
follows that $J_K/nr(\fN(\R))$ is an abelian group of exponent $p$.
The canonical homomorphism $J_K/nr(\fN(\R)) \to  K^\times \backslash
J_K/nr(\fN(\R))$ is surjective, so the resulting quotient is finite,
abelian, and of exponent $p$ which completes the proof.
\end{proof}

\subsection{The class field associated to a maximal order}
We have seen above that the distinct isomorphism classes of maximal
orders in $B$ (i.e., the isomorphism classes in the genus of any given
maximal order $\R$) are in one-to-one correspondence with the double
cosets in the group $G = K^\times \backslash J_K/nr(\fN(\R))$.  Put
$H_\R = K^\times nr(\fN(\R))$ and $G_\R = J_K/H_\R$.  Since $J_K$ is
abelian, $G$ and $G_\R$ are naturally isomorphic, and since $H_\R$
contains a neighborhood of the identity in $J_K$, it is an open
subgroup (Proposition II.6 of \cite{Higgins}).

Since $H_\R$ is an open subgroup of $J_K$ having finite index, there
is by class field theory \cite{Lang-ANT}, a class field $K(\R)$
associated to it.  The extension $K(\R)/K$ is an abelian extension
with $Gal(K(\R)/K) \cong G_\R = J_K/H_\R$ and with $H_\R = K^\times
N_{K(\R)/K}(J_{K(\R)})$.  Moreover, a prime $\nu$ of $K$ (possibly
infinite) is unramified in $K(\R)$ if and only if $\O_\nu^\times
\subset H_\R$, and splits completely if and only if $K_\nu^\times
\subset H_\R$.  Here if $\nu$ is archimedean, we take $\O_\nu^\times =
K_\nu^\times$.  From our computations at the beginning of this
section, we saw that $nr(\N(\R_\nu)) = K_\nu^\times$ or $\O_\nu^\times
(K_\nu^\times)^p$.  In particular $K(\R)/K$ is an everywhere unramified
extension of $K$.

\begin{prop}\label{prop:generators} Let $S$ be any finite set of primes
  in $K$ which includes the infinite primes.  The group $G_\R$ can be
  generated by cosets having representatives of the form $e_{\nu_i} =
  (1, \dots, 1, \pi_{\nu_i}, 1, \dots )$ for $\nu_i \notin S$,
  $\pi_{\nu_i}$ a uniformizer in $K_{\nu_i}$.
\end{prop}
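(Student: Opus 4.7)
The plan is to combine the local structure of $H_\R$ derived at the start of this section with the Artin reciprocity map for the everywhere-unramified abelian extension $K(\R)/K$, and then invoke the Chebotarev density theorem.

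First I would observe that $\O_\nu^\times \subseteq nr(\N(\R_\nu))$ at every finite prime $\nu$: the local calculation recorded just before the subsection gives $nr(\N(\R_\nu)) = K_\nu^\times$ at ramified $\nu$ and $nr(\N(\R_\nu)) = \O_\nu^\times (K_\nu^\times)^p$ at split $\nu$, while $nr(\N(\R_\nu)) = K_\nu^\times$ at every archimedean $\nu$. Writing an arbitrary $\tilde a = (a_\nu) \in J_K$ as $a_\nu = u_\nu \pi_\nu^{m_\nu}$ at each finite place, all of the units $u_\nu$ together with all archimedean components lie in $nr(\fN(\R)) \subseteq H_\R$. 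Hence the class of $\tilde a$ in $G_\R$ is represented by a finite product $\prod_\nu e_\nu^{m_\nu}$ ranging over finite primes only, so that $\{e_\nu H_\R : \nu \text{ finite}\}$ already generates $G_\R$.

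Next I would eliminate the finitely many primes in $S$. Since $K(\R)/K$ was shown above to be unramified at every prime of $K$, the global Artin map furnishes an isomorphism $G_\R \xrightarrow{\sim} Gal(K(\R)/K)$ under which the coset $e_{\nu_i} H_\R$ corresponds to the Frobenius $\Frob_{\nu_i} \in Gal(K(\R)/K)$ for every finite prime $\nu_i$. By the Chebotarev density theorem, each element of $Gal(K(\R)/K)$ equals $\Frob_{\nu_i}$ for infinitely many finite primes $\nu_i$, and in particular for some (indeed infinitely many) $\nu_i \notin S$. Pulling back through the Artin isomorphism, the cosets $\{ e_{\nu_i} H_\R : \nu_i \notin S \}$ surject onto $G_\R$, which proves the proposition.

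The only step that has any real content is the passage from ``generators drawn from all finite primes'' to ``generators drawn from finite primes outside $S$'', and this is handled in a single stroke by Chebotarev applied to the unramified class field $K(\R)/K$. Because both necessary ingredients (the local shape of $nr(\N(\R_\nu))$ and the everywhere-unramified nature of $K(\R)/K$) have already been established in the preceding pages, I do not anticipate any serious obstacle.
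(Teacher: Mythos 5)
Your proof is correct and follows essentially the same route as the paper: identify $G_\R$ with $Gal(K(\R)/K)$ via the Artin map and use Chebotarev to realize every Galois element (hence every coset) as $\Frob_{\nu_i}$ for infinitely many primes $\nu_i$, in particular for primes outside $S$. Your preliminary reduction to finite-prime uniformizer ideles is left implicit in the paper but is a harmless (and slightly more careful) addition.
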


\begin{proof}
  Artin reciprocity gives the exact sequence
\[\xymatrix{1\ar@{->}[r]& H_\R\ar@{->}[r]&  J_K\ar@{->}[r]^(.23)\Phi&
  J_K/H_\R \cong Gal(K(\R)/K) \ar@{->}[r]& 1},
 \]
with $\Phi$ the Artin map.  By the Chebotarev density theorem, there
are an infinite number of primes of $K$ in the preimage of each
element of $Gal(K(\R)/K)$ under $\Phi$.  The $e_{\nu_i}$ are the
images of those primes in $J_K$.
\end{proof}

We shall denote the generators of $G_\R\cong (\Z/p\Z)^m$ as
$\{\overline e_{\nu_i}\}_{i=1}^m$ where the $e_{\nu_i}$ are the ideles
of the previous proposition.  Let $L/K$ be a field extension of degree
$p$.  We now show that the generators $\{\overline e_{\nu_i}\}$ can be
chosen so that the $K$-primes $\nu_i$ have certain splitting
properties in $L$.  We shall use the symbol $(\fP, L/K)$ to denote the
Frobenius automorphism for an unramified prime $\fP$ of $L$ when $L/K$
is arbitrary, but also $(\nu,L/K)$ viewed as the Artin map for a prime
$\nu$ of $K$ when $L/K$ is an abelian extension.

\begin{prop}\label{prop:splitting-properties} With the notation as
  above, we have:
  \begin{compactenum}
    \item If $L \subset K(\R)$, then we may assume that $G_\R$ is
      generated by elements $\{\overline e_{\nu_i}\}$ where $\nu_i$
      splits completely in $L$ for $i > 1$, and $\nu_1$ is inert in
      $L$.
\item If $L \not\subset K(\R)$ then we may assume that $G_\R$ is
  generated by elements $\{\overline e_{\nu_i}\}$ where $\nu_i$ splits
  completely in $L$ for all $i\ge 1$.
  \end{compactenum}
\end{prop}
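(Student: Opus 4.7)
The plan is to use the Chebotarev density theorem in a Galois extension of $K$ large enough to control simultaneously the Artin symbol in $K(\R)/K$ and the splitting of primes in $L$. Two facts from the paper set the stage: by Theorem~\ref{thm:genus_p_group} the group $G_\R\cong(\Z/p\Z)^m$ is elementary abelian, so any generating set can be refined to a basis; and by Proposition~\ref{prop:generators} every element of $G_\R$ has a representative idele $e_\nu$ supported at a single prime $\nu$ outside any prescribed finite set $S$.

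For (1), $L\subset K(\R)$ realises $L/K$ as a subextension of an abelian extension, hence abelian, and of prime degree $p$, hence cyclic. The subgroup $H=Gal(K(\R)/L)$ has index $p$ in $G_\R$, so I would choose a basis $\sigma_1,\dots,\sigma_m$ of $G_\R$ with $\sigma_1\notin H$ and $\sigma_2,\dots,\sigma_m\in H$. Chebotarev then produces primes $\nu_i\notin S$ with $(\nu_i,K(\R)/K)=\sigma_i$. Under restriction $G_\R\to Gal(L/K)$, each $\sigma_i$ with $i\ge 2$ maps to $1$, so $\nu_i$ splits completely in $L$, while $\sigma_1$ maps to a generator of $Gal(L/K)$, so $\nu_1$ has Frobenius of order $p$ and is therefore inert in the cyclic extension $L/K$.

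For (2), the central reduction is to prove $M\cap K(\R)=K$, where $M$ is the Galois closure of $L/K$. If $L/K$ is itself Galois then $M=L$ and the hypothesis $L\not\subset K(\R)$ together with $[L:K]=p$ prime does it. Otherwise $G:=Gal(M/K)$ is a non-abelian transitive subgroup of $S_p$ containing a $p$-cycle, and I claim $p\nmid |G^{\mathrm{ab}}|$. Indeed, any normal subgroup $N$ of $G$ of index $p$ would have order coprime to $p$, so the $N$-orbits on $\{1,\dots,p\}$ would each have size coprime to $p$; since $G$ permutes these orbits transitively and $p$ is prime, the common orbit size must be $1$, forcing $N=1$ and $G=C_p$, contradicting non-abelianness. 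Consequently every abelian subextension of $M/K$---in particular $M\cap K(\R)$---has degree over $K$ prime to $p$. But $[M\cap K(\R):K]$ also divides $|G_\R|$, a power of $p$, so $M\cap K(\R)=K$.

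Granting $M\cap K(\R)=K$, $Gal(M\cdot K(\R)/K)=Gal(M/K)\times Gal(K(\R)/K)$. For any basis $\sigma_1,\dots,\sigma_m$ of $G_\R$, Chebotarev applied in $M\cdot K(\R)/K$ then yields primes $\nu_i$ with Frobenius $(1,\sigma_i)$; such primes split completely in $M$ (and hence in $L$) while carrying Artin symbol $\sigma_i$ in $K(\R)/K$, as required. The main technical hurdle I anticipate is the group-theoretic verification that $|G^{\mathrm{ab}}|$ is coprime to $p$ in the non-Galois subcase of (2); everything else reduces to Chebotarev bookkeeping together with the elementary abelian structure of $G_\R$ from Theorem~\ref{thm:genus_p_group}.
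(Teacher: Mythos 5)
Your argument is correct and proves the proposition, but it diverges from the paper's proof in two substantive ways, both in part (2); part (1) (choosing a basis of the elementary abelian group $G_\R$ adapted to the index-$p$ subgroup $Gal(K(\R)/L)$ and reading off inertness versus complete splitting from the restriction to $L$) is essentially the paper's argument. For the key reduction $\hL\cap K(\R)=K$ in the non-Galois case, the paper argues by a degree count: $F=\hL\cap K(\R)$ has degree a power of $p$ dividing $[\hL:K]\mid p!$, hence degree $1$ or $p$, and in the latter case $F$ is abelian hence Galois, so $F\ne L$, $F\cap L=K$, and $p^2=[FL:K]$ divides $p!$, a contradiction. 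You instead prove the group-theoretic statement that a non-abelian transitive subgroup of $S_p$ has no normal subgroup of index $p$; your orbit argument is sound (such a subgroup has order prime to $p$, its orbits have common size dividing that order and summing to $p$, hence are singletons, forcing the subgroup to be trivial and the whole group cyclic of order $p$), so $p\nmid\lvert Gal(\hL/K)^{\mathrm{ab}}\rvert$ and the intersection with the $p$-power-degree abelian field $K(\R)$ is trivial. Second, having this disjointness, the paper applies Chebotarev to $Gal(K(\R)\hL/\hL)$ and must separately invoke a density argument to select primes $\fP$ of $\hL$ of absolute inertia degree one before descending to $K$; you apply Chebotarev in $Gal(\hL K(\R)/K)\cong Gal(\hL/K)\times Gal(K(\R)/K)$ directly to the element $(1,\sigma_i)$, whose conjugacy class is a singleton since the first coordinate is the identity and the second factor is abelian, obtaining the complete splitting in $\hL$ and the prescribed Artin symbol in $K(\R)$ in one stroke. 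Your route costs a little more group theory up front but buys a cleaner Chebotarev step with no inertia-degree bookkeeping; the paper's is more elementary but requires the extra density observation.
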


\begin{remark}
  Recall that $[K(\R):K] = p^m = t(\R)$ for $m\ge 0$.  The condition
  $L \subset K(\R)$ clearly forces $m \ge 1$, however when $L
  \not\subset K(\R)$, it is possible that the type number equals 1,
  though in that case the second part of the proposition is vacuously true.
\end{remark}

\begin{proof}
  First suppose that $L \subset K(\R)$.  Since $K(\R)/K$ is abelian,
  Galois theory provides the following exact sequence:
\[\xymatrix{1\ar@{->}[r]& Gal(K(\R)/L)\ar@{^{(}->}[r]^\iota&  Gal(K(\R)/K)\ar@{->}[r]^(.55){\textrm{res}_L}&
 Gal(L/K) \ar@{->}[r]& 1}.
 \]
 Let $\sigma \in Gal(K(\R)/L)$.  Viewing $Gal(K(\R)/L)\subseteq
 Gal(K(\R)/K)$, we can (by Chebotarev) write $\sigma = (\nu,K(\R)/K)$
 for an unramified prime $\nu$ of $K$.  From the exact sequence,
 $\sigma|_L = 1$, but $\sigma|_L = (\nu, L/K) = 1$ which implies $\nu$
 splits completely in $L$.  Now let $\tau$ be any element of
 $Gal(K(\R)/K)$ not in $Gal(K(\R)/L)$.  Writing $\tau = (\mu,K(\R)/K)$
 ($\mu$ unramified), we see that since $\tau|_L \ne 1$, we have $(\mu, L/K)
 \ne 1$ which means $\mu$ does not split completely in $L$.  But $L/K$
 having prime degree means $\mu$ is inert in $L$.  Note that for any
 $\tau \notin Gal(K(\R)/L)$, $Gal(K(\R)/K)$ is the internal direct
 product of $\la \tau\ra$ and $Gal(K(\R)/L)$ from which the assertion
 follows.  In particular, if $\mu$ is any prime of $K$ inert in $L$,
 $Gal(K(\R)/K)$ is generated by $(\mu, K(\R)/K)$ and $Gal(K(\R)/L)$.

 Next we assume that $L \not\subset K(\R)$; there are two cases
 corresponding to whether $L/K$ is Galois or not.  We begin with the
 case that $L/K$ is Galois.  Since $L/K$ has prime degree, $L
 \not\subset K(\R)$ implies that $L\cap K(\R) = K$, hence the
 composite extension $K(\R)L/L$ is abelian with $Gal(K(\R)L/L) \cong
 Gal(K(\R)/K)$ via restriction.  Let $\sigma$ be any nontrivial
 element of $Gal(K(\R)L/L)$, and write $\sigma = (\fP,K(\R)L/L)$ as an
 Artin symbol by Chebotarev, where $\fP$ is a prime of $L$ unramified
 over $K$.  Put $\nu = \fP \cap K$.  We claim that we may also assume
 that the inertia degree $f(\fP/\nu) = 1$. To see this note that the
 set of primes of $L$ having inertia degree (over $\Q$) greater than
 one has density 0, and Chebotarev guarantees we may
 choose $\fP$ from a set of primes of positive density, hence the
 claim.  Thus every nontrivial element $\sigma$ of $Gal(K(\R)L/L)$ has
 the form $\sigma = (\fP,K(\R)L/L)$ with ramification index
 $e(\fP/\nu) = 1$ and inertia degree $f(\fP/\nu) =1$, where $\nu = \fP
 \cap K$.  Since $(\fP,K(\R)L/L)|_{K(\R)} = (\nu,
 K(\R)/K)^{f(\fP/\nu)} = (\nu, K(R)/K)$, every nontrivial element of
 $Gal(K(\R)/K)$ has the form $(\nu, K(\R)/K)$ where $\nu$ is a prime of
 $K$ which splits completely in $L$ as desired.

 Finally, we assume $L\not\subset K(\R)$ and $L/K$ is not Galois.  Let
 $\hL$ denote the Galois closure of $L/K$.  It is well-known (see p58
 of \cite{Neukirch-book}), that a prime $\nu$ of $K$ splits completely
 in $L$ if and only if it splits completely in $\hL$. So if we can
 show that $\hL \cap K(\R) = K$, the result in this case will follow
 from the previous one.  To that end, let $F = \hL \cap K(\R)$.  Then
 $K \subset F \subset K(\R)$, so $[F:K]$ is a power of the prime $p$.
Now $[L:K] = p$ implies $[\hL:K] \mid p!$, and since
 $F\subset \hL$, we have $[F:K] \mid p!$. So $[F:K] = 1$ or
 $p$. Suppose $[F:K] = p$.  As $K \subset F \subset K(\R)$, $F/K$ is
 an abelian extension of $K$, so in particular $F \ne L$, which
 implies $F\cap L = K$.  Thus $FL/L$ is Galois with $Gal(FL/L) \cong
 Gal(F/K)$.  In particular, $[FL:K] = p^2$.  But $FL \subset \hL$, so
 $p^2 = [FL:K] \mid [\hL:K] \mid p!$, a contradiction.  Thus $\hL \cap
 K(\R) = K$ as desired.
\end{proof}

\subsection{Parametrizing the isomorphism classes}
Let $\R$ be a fixed maximal order in $B$, and recall $G_\R =
J_K/K^\times nr(\fN(\R)) \cong (\Z/p\Z)^m$, $m\ge 0$.  Let
$\{e_{\nu_i}\}_{i=1}^m \subset J_K$ so that their images $\{\overline
e_{\nu_i}\}_{i=1}^m$ generate $G_\R$.  By
Proposition~\ref{prop:generators}, we may choose the $\nu_i$ to avoid
any finite set of primes; for now we simply assume that all the
$\nu_i$ are non-archimedean and split in $B$, in particular that
$B_{\nu_i} \cong M_p(K_{\nu_i})$.  For each $\nu_i$ we shall regard
$\R_{\nu_i}$ as a vertex in the building for $SL_p(K_{\nu_i})$, and
let $C_i$ be any chamber containing $\R_{\nu_i}$.  We may assume that
in a given labeling of the building, $\R_{\nu_i}$ has type zero
\cite{Ronan}, and we label the remaining vertices of the chamber $C_i$
as $\R_{\nu_i}^{(k)}$, (having type $k$) $k = 1, \dots, p-1$, putting
$\R_{\nu_i}^{(0)} = \R_{\nu_i}.$

Given a $\gamma = (\gamma_i) \in (\Z/p\Z)^m$, we define $p^m$ distinct
maximal orders, $D^\gamma$, in $B$ via the local-global correspondence
by providing
the following local data:

\begin{equation}\label{eqn:parametrization}
\D_\nu^\gamma =
\begin{cases}
  \R_{\nu_i}^{(\gamma_i)}&\textrm{if $\nu = \nu_i$}\\
  \R_\nu&\textrm{otherwise}.
\end{cases}
\end{equation}

We claim that any such collection of maximal orders parametrizes the
genus of $\R$, that is given any maximal order $\E$, there is a unique $\gamma
\in  (\Z/p\Z)^m$, so that $\E \cong \D^\gamma$.  To show this, let
$\fM$ denote the set of all maximal orders in $B$, and define a map
$\rho: \fM \times \fM \to G_\R$ as follows.

Let $\R_1, \R_2 \in \fM$.  For $\nu$ a finite prime of $K$ (split in
$B$), we have defined the type distance between their localizations:
$td_\nu(\R_{1\nu}, \R_{2\nu}) \in \Z/p\Z$. For $\nu$ archimedean or
$\nu$ finite and ramified in $B$, define $td_\nu(R_{1\nu},
R_{2\nu})=0$.  Recall that since $\R_{1\nu} = \R_{2\nu}$ for almost
all $\nu$, $td_\nu(\R_{1\nu}, \R_{2\nu})=0$ for almost all primes
$\nu$. Let $\rho(\R_1, \R_2)$ be the image in $G_\R$ of the idele
$(\pi_\nu^{td_\nu(\R_{1\nu}, \R_{2\nu})})$.  Note that while the idele
is not well-defined, its image in $G_\R$ is since the local factor at
the finite split primes has the form $K_\nu^\times/\O_\nu^\times
(K_\nu^\times)^p$.

We now show that any such collection of maximal orders given as the
$\D^\gamma$ parametrizes the genus.

\begin{prop}\label{prop:parametrize}
Let $\R$ be a fixed maximal order in $B$, and consider the collection
of maximal orders $\D^\gamma$ defined above.
\begin{compactenum}
\item If $\E$ is a maximal order in $B$ and $\E \cong \R$, then
  $\rho(\R, \E)$ is trivial.
\item If $\E \cong \E'$ are maximal orders in $B$, then $\rho(\R, \E) = \rho(\R,\E')$.
    \item $\D^\gamma \cong \D^{\gamma'}$ if and only if $\gamma = \gamma'$.
\end{compactenum}
\end{prop}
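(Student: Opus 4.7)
The plan is to verify (1) and (2) by lifting isomorphism to global conjugation via the Skolem--Noether theorem and then tracking the resulting effect on type distances at each local place; (3) will then drop out of (2) together with a direct computation of $\rho(\R,\D^\gamma)$.

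For (1), write $\E = b\R b^{-1}$ with $b \in B^\times$, which is possible by Skolem--Noether. Localizing, $\E_\nu = b\R_\nu b^{-1}$; at any split finite prime $\nu$, if $\R_\nu = \End_\O(\L)$ then $\E_\nu = \End_\O(b\L)$, and the definition of type distance gives $td_\nu(\R_\nu,\E_\nu) \equiv \ord_\nu(nr(b)) \pmod p$. Thus the idele $(\pi_\nu^{td_\nu(\R_\nu,\E_\nu)})$ agrees with the principal idele $nr(b) \in K^\times$ up to a factor which is a unit at each split finite prime, and which lies in $K_\nu^\times$ at every ramified finite prime and every infinite prime. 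By the computation of $nr(\N(\R_\nu))$ at the start of Section~\ref{sec:local_results}, that factor lies in $nr(\fN(\R))$, so the idele itself lies in $K^\times \cdot nr(\fN(\R))$ and $\rho(\R,\E)$ is trivial in $G_\R$.

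For (2), write $\E' = c\E c^{-1}$ with $c \in B^\times$. The essential tool is additivity of type distance along a chain of maximal orders: for $\fM_1,\fM_2,\fM_3$ maximal in $B_\nu$ at a split prime, $td_\nu(\fM_1,\fM_3) \equiv td_\nu(\fM_1,\fM_2) + td_\nu(\fM_2,\fM_3) \pmod p$. This follows from the invariant-factor/determinant description: if $g\L_1 = \L_2$ and $h\L_2 = \L_3$, then $\ord_\nu(nr(hg)) = \ord_\nu(nr(h)) + \ord_\nu(nr(g))$. Applied with $\fM_2 = \E_\nu = \End_\O(\L)$ and $\fM_3 = \E'_\nu = \End_\O(c\L)$, this yields $td_\nu(\R_\nu,\E'_\nu) = td_\nu(\R_\nu,\E_\nu) + \ord_\nu(nr(c)) \pmod p$. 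Exactly as in (1), the ratio of the two defining ideles is $nr(c)$ modulo $nr(\fN(\R))$, and since $nr(c) \in K^\times$ we conclude $\rho(\R,\E) = \rho(\R,\E')$ in $G_\R$.

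For (3), the reverse implication is immediate from the definition of $\D^\gamma$. For the forward implication, compute $\rho(\R,\D^\gamma)$ directly: by construction $\D^\gamma_\nu = \R_\nu$ for $\nu \notin \{\nu_1,\dots,\nu_m\}$, so $td_\nu = 0$ at all such primes, while at $\nu = \nu_i$ the vertices $\R_{\nu_i}$ and $\R_{\nu_i}^{(\gamma_i)}$ lie in the common chamber $C_i$ with labels $0$ and $\gamma_i$, so $td_{\nu_i}(\R_{\nu_i},\D^\gamma_{\nu_i}) = \gamma_i$. Hence the idele representing $\rho(\R,\D^\gamma)$ is $\prod_{i=1}^m e_{\nu_i}^{\gamma_i}$, whose image in $G_\R \cong (\Z/p\Z)^m$ is exactly $\gamma$. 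If $\D^\gamma \cong \D^{\gamma'}$, part (2) forces $\rho(\R,\D^\gamma) = \rho(\R,\D^{\gamma'})$, i.e.\ $\gamma = \gamma'$.

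The main obstacle I anticipate is the additivity of type distance invoked in (2): the definition in Section~\ref{sec:local_results} assumes a containment $\L_1 \subseteq \L_2$, so one must check that type distance descends to a well-defined additive function on ordered pairs of vertices modulo $p$, independent of homothety representatives. This is implicit in the remark that types add under the $GL_p(k)$-action on the building for $SL_p$, but deserves to be recorded carefully; once in hand, the rest is routine bookkeeping inside $J_K$ using the explicit description of $nr(\N(\R_\nu))$.
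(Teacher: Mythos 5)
Your proposal is correct and follows essentially the same route as the paper: Skolem--Noether plus the determinant/invariant-factor computation of $td_\nu$ for parts (1) and (2) (the paper phrases your ``additivity'' as a comparison of elementary divisors of $\Lambda_\nu$, $\Gamma_\nu$, $b\Gamma_\nu$), and the explicit lattice realization $\Lambda^{(k)}$ of the chamber vertices for part (3), where the paper computes $\rho(\D^\gamma,\D^{\gamma'})$ directly rather than $\rho(\R,\D^\gamma)=\gamma$ --- a cosmetic difference. The sign conventions in $td_\nu$ differ from the paper's in a couple of places, but since everything lives in a group of exponent $p$ and only triviality or nontriviality of the class matters, this is harmless.
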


\begin{proof}
  For the first assertion, we may assume that $\E = b\R b^{-1}$ for
  some $b \in B^\times$ by Skolem-Noether, which of course means
  $\E_\nu = b \R_\nu b^{-1}$ for each prime $\nu$.  For a finite prime
  which splits in $B$, we may take $\R_\nu = \End(\Lambda_\nu)$ for
  some $\O_\nu$-lattice $\Lambda_\nu$, and so $\E_\nu
  = \End(b\Lambda_\nu)$.  It follows that
\[td_\nu(R_\nu,\E_\nu) \equiv \ord_\nu(\det(b^{-1})) \equiv
\ord_\nu(nr(b^{-1})) \pmod p,
\]
and since $G_\R$ is trivial at the archimedean primes and the finite
primes which ramify in $B$, we conclude that
$\rho(\R, \E) = (\overline{nr(b^{-1})}) = (\bar 1)$ in $G_\R = J_K/K^\times
nr(\fN(\R))$ as $(nr(b^{-1}))$ is in the image of $K^\times$ in $J_K$.

To see the second assertion, we have as above $\E' = b\E b^{-1}$ for
some $b \in B^\times$ and so $\E'_\nu = b \E_\nu b^{-1}$ for each
prime $\nu$.  If we write $\R_\nu = \End(\Lambda_\nu)$ and $\E_\nu
= \End(\Gamma_\nu)$ for
   $\O_\nu$-lattices $\Lambda_\nu$ and $\Gamma_\nu$, then $\E'_\nu
   = \End(b\Gamma_\nu)$.  Considering the elementary divisors of the
   lattices $\Lambda_\nu$, $\Gamma_\nu$ and $b\Gamma_\nu$, we easily
   see that $td_\nu(\R_\nu, \E'_\nu) \equiv td_\nu(R_\nu,\E_\nu) +
   \ord_\nu(\det(b^{-1})) \pmod p$, from which the result follows as in the
   first case.

   For the last assertion, we need only show one direction.  Fix a
   prime $\nu=\nu_i$ among the finite number used to determine the
   parametrization $\D^\gamma$.  Then we are comparing
   $R_{\nu}^{(\gamma_i)}$ and $R_{\nu}^{(\gamma_i')}$. Since the
   $\R_\nu^{(k)}$ $k = 0, \dots, p-1$ are the vertices of a fixed
   chamber in the affine building for $SL_p(K_\nu)$, they can be
   realized (p 362 of \cite{Abramenko-Brown}) as $R_\nu^{(k)}
   = \End_{\O_\nu}(\Lambda^{(k)})$ with $\Lambda^{(k)} = \O_\nu \pi
   \omega_1 \oplus \cdots \oplus \O_\nu\pi\omega_k \oplus \O_\nu
   \omega_{k+1} \oplus \cdots \oplus \O_\nu\omega_p$.  Here the set
   $\{\omega_i\}$ a basis of a vector space $V/K_\nu$ through which we
   have identified $B_\nu = \End_{K_\nu}(V)$.  It follows that
   $td_\nu(R_{\nu}^{(\gamma_i)},R_{\nu}^{(\gamma_i')}) = \gamma_i' -
   \gamma_i \pmod p$.  It is now easy to see that if $\gamma \ne
   \gamma'$, then $\rho(\D^\gamma, \D^{\gamma'}) \ne (\bar 1)$, so $\D^\gamma
   \not\cong \D^{\gamma'}$.
\end{proof}

\subsection{Selective Orders and the Main
  Theorem}\label{sec:selective_orders}

We reestablish the notation from the introduction.  Let $p$ an odd
prime, $B$ a central simple algebra of dimension $p^2$ over a number
field $K$, and $L/K$ a field extension of degree $p$ which satisfies
$L \subset B$.  Let $\O_K$ denote the ring of integers of $K$, and let
$\Omega$ denote a commutative $\O_K$-order of rank $p$ in $L$.
Necessarily $\Omega$ is an integral domain with field of fractions
equal to $L$, and we have seen that $\Omega$ is contained in a maximal order
$\R$ of $B$ which we now fix.

Given that $\Omega$ is contained in $\R$, the question is into which
other isomorphism classes in the genus of $\R$ does $\Omega$ embed?
Recall that since $\R$ is maximal, this simply asks into which
isomorphism classes of maximal orders in $B$ does $\Omega$ embed?
The general case is that it embeds in all the isomorphism classes,
but when it does not, we follow \cite{Chinburg-Friedman} and call
$\Omega$ selective.  Selectivity is characterized by our main theorem.

\begin{theorem}\label{thm:main}
  With the notation fixed as above, every maximal order in $B$
  contains a conjugate (by $B^\times$) of $\Omega$ except when the
  following conditions hold:
  \begin{compactenum}
  \item $L \subseteq K(\R)$, that is $L$ is contained in the class
    field associated to $\R$.
 \item Every prime ideal $\nu$ of
   $K$ which divides $N_{L/K}(\f)$ splits in $L/K$.
  \end{compactenum}
  Suppose now that both conditions (1) and (2) hold.  Then precisely
  one-$p$th of the isomorphism classes of maximal orders contain a
  conjugate of $\Omega$.  Those classes are characterized by means of
  the Frobenius $\Frob_{L/K}$ as follows: $\E$ is a maximal order
  which contains a conjugate of $\Omega$ if and only if
  $\Frob_{L/K}(\rho(\R, \E))$ is trivial in $Gal(L/K)$.
\end{theorem}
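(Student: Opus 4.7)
The plan is to analyze, for each $\gamma \in (\Z/p\Z)^m$, whether the parametrizing maximal order $\D^\gamma$ of Proposition~\ref{prop:parametrize} contains a $B^\times$-conjugate of $\Omega$, and then to translate the resulting condition on $\gamma$ into the language of class field theory. Since $\D^\gamma$ differs from $\R$ only at the finitely many primes $\nu_i$ used in the parametrization, and since Proposition~\ref{prop:generators} lets us choose the $\nu_i$ to avoid any prescribed finite set, I will assume the $\nu_i$ are split in $B$ and disjoint from the primes dividing $\f$. Then the existence of $b \in B^\times$ with $b\Omega b^{-1} \subseteq \D^\gamma$ decomposes into local embedding problems at each $\nu_i$, together with a global patching constraint controlled by the remaining primes, especially those dividing $\f$.

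The local heart of the argument is the determination, at a finite split prime $\nu$, of the type set $T_\nu \subseteq \Z/p\Z$ consisting of types (measured relative to $\R_\nu$) of maximal orders of $B_\nu$ containing some $B_\nu^\times$-conjugate of $\Omega_\nu$. Identifying $B_\nu = \End_{K_\nu}(V)$ with $V$ a faithful $L_\nu$-module, maximal orders containing $\Omega_\nu$ correspond to $\Omega_\nu$-stable $\O_\nu$-lattices in $V$ up to homothety; a direct invariant-factor computation against $\R_\nu$ shows $T_\nu = \Z/p\Z$ whenever $\nu \nmid \f$ and $\nu$ splits or ramifies in $L$, while $T_\nu = \{0\}$ when $\nu \nmid \f$ and $\nu$ is inert in $L$. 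A parallel analysis at a prime $\nu \mid \f$ reverses the dichotomy, so that local selectivity at a conductor prime occurs exactly when $\nu$ splits completely in $L$. This local lemma is the step I expect to require the most care, since the non-maximal $\Omega_\nu$ at conductor primes forces one to work with a proper subcomplex of the building for $SL_p(K_\nu)$ rather than a single apartment.

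Combining the local picture with Proposition~\ref{prop:splitting-properties}, the case $L \not\subset K(\R)$ is immediate: all $\nu_i$ can be taken split completely in $L$, so every $\gamma$ is locally admissible, and any remaining obstruction at conductor primes can be absorbed by the local freedom identified in the previous paragraph, showing $\Omega$ embeds in every isomorphism class. When $L \subseteq K(\R)$ the prime $\nu_1$ can be chosen inert in $L$, forcing $\gamma_1 = 0$ and cutting out exactly $1/p$ of the classes, \emph{provided} the local embeddings patch to a global element of $B^\times$. The main obstacle is precisely this globalization: via Theorem~\ref{thm:type_number_bijection} the patching obstruction lives in $G_\R$, and the conductor analysis shows that if some prime dividing $N_{L/K}(\f)$ fails to split in $L$ then the local freedom at that prime allows us to absorb any prescribed image in $G_\R$, restoring non-selectivity, while if every such prime splits in $L$ the local conjugators contribute nothing to $G_\R$ and the $1/p$ selectivity persists.

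Finally, the Frobenius characterization follows by tracking this obstruction through Artin reciprocity. When $L \subseteq K(\R)$, restriction yields a surjection $G_\R \twoheadrightarrow Gal(L/K)$, and under the Artin map an idele of the form $(\pi_{\nu_i}^{\gamma_i})$ maps to $\prod_i \Frob_{L/K}(\nu_i)^{\gamma_i}$; since $\rho(\R, \D^\gamma)$ is by construction the class of this idele in $G_\R$, the isomorphism classes admitting an embedding of $\Omega$ are exactly those for which $\Frob_{L/K}(\rho(\R, \D^\gamma)) = 1$ in $Gal(L/K)$, completing the theorem.
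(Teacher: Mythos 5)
Your overall architecture---parametrize the isomorphism classes by the $\D^\gamma$, attach to each prime a local type set $T_\nu\subseteq\Z/p\Z$, and push the achievable types through the Artin map---is sound and is essentially the paper's strategy recast in the local-type-set language of Chinburg--Friedman. But the local lemma you yourself identify as the heart of the argument is misstated. First, a maximal order of $B_\nu$ containing ``some $B_\nu^\times$-conjugate of $\Omega_\nu$'' is no condition at all, since all maximal orders of $B_\nu$ are conjugate; $T_\nu$ must be the set of types of maximal orders containing $\Omega_\nu$ itself. The whole point is that local conjugacy is free while global conjugacy is constrained, with the obstruction living in $G_\R$. Second, and more seriously, the ``reversed dichotomy'' at conductor primes is false: if $\nu\mid N_{L/K}(\f)$ and $\nu$ splits completely in $L$, then $\Omega_\nu\subseteq\O_{L_\nu}\cong\O_\nu^p$ sits inside the diagonal order, which lies in every vertex of a chamber, so $T_\nu=\Z/p\Z$---there is no local selectivity there. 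Condition (2) does not enter because split conductor primes are locally selective; it enters because an \emph{inert} conductor prime has $T_\nu=\Z/p\Z$ (this is Proposition~\ref{Omega-2}, which rests on proving $\Omega\subseteq\O_K+\nu\O_L$ via the conductor--different formula and a residue-field argument) \emph{and} nontrivial Frobenius, so its local contribution surjects onto $Gal(L/K)$ and destroys selectivity. Your final count happens to survive this error, because the split generators supplied by Proposition~\ref{prop:splitting-properties} already exhaust $\ker(\Frob_{L/K})$ no matter what happens at split conductor primes, but the lemma as stated is not what you would end up proving.

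The second gap is that the two local facts carrying all the weight are asserted rather than argued, and you locate the difficulty in the wrong place. For inert $\nu\nmid N_{L/K}(\f)$ one needs $T_\nu=\{0\}$; this is where the selectivity is actually created, and it is where the paper spends its technical lemma (no inessential discriminant divisor at $\nu$) together with Dedekind--Kummer and the block-triangular reduction of the intersection of two maximal orders of distinct types. Your invariant-factor route can be made to work---$\nu\nmid N_{L/K}(\f)$ forces $\Omega_\nu=\O_{L_\nu}$, the maximal order of an unramified degree-$p$ field extension, whose stable lattices form a single homothety class---but that reduction is precisely the content you must supply. For inert $\nu\mid N_{L/K}(\f)$ one needs $T_\nu=\Z/p\Z$, which is not an invariant-factor computation in the building at all but the arithmetic statement $\Omega\subseteq\O_K+\nu\O_L$. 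Without these two arguments the proposal is a correct outline rather than a proof.
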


First we give some examples of selective and non-selective orders. Let
$S_\infty$ denote the set of infinite primes of $K$ and let $Ram(B)$
denote the set of primes in $K$ which ramify in $B$.

\begin{example}
  Let $p$ be an odd prime, $K$ a number field with class number $p$,
  let $B = M_p(K)$, and $\R = M_p(\O_K)$.  Then $G_\R =
  J_K/K^\times (J_K \cap (\prod_{\nu\not\in
    S_\infty}(K_\nu^\times)^p\O_\nu^\times\prod_{\nu\in
    S_\infty}K_\nu^\times))\cong C_K/C_K^p\cong C_K$, where $C_K$ is
  the ideal class group of $K$, and $C_K^p$ the subgroup of $p$th
  powers.  We conclude the type number $t(\R) = |G_\R| = p$.  This
  means that $[K(\R):K] = p$ and $K(\R)/K$ is an everywhere unramified
  abelian extension of $K$, so $K(\R) \subset \widetilde K$, where
  $\widetilde K$ is the Hilbert class field of $K$.  Degree
  considerations force $K(\R) = \widetilde K$.  Put $L = K(\R) =
  \widetilde K$.  Because $B$ is everywhere split, $L$ embeds into $B$.
  So we have $L \subseteq K(\R)$, $L \subset B$.  This means that
  $\O_L$ is selective as established in \cite{Arenas-Carmona},
  \cite{Chevalley-book} as well as our main theorem.  Now let $\nu$ be
  a prime of $K$, necessarily unramified in $L = \widetilde K$, and
  consider the order $\Omega = \O_K + \nu \O_L$.  We easily see that
  $\nu\O_L \subset \f$ which implies $\f \mid \nu \O_L$, hence
  $N_{L/K}(\f) \mid N_{L/K}(\nu \O_L) = \nu^p \O_K$ whether $\nu$ is
  inert or splits completely in $L$.  Since $\f \ne \O_L$, we see that
  $\nu \mid N_{L/K}(\f)$, so by condition (2) of the theorem, in the
  case that $\nu$ is inert, we see $\Omega$ is not selective, but when
  $\nu$ splits completely, $\Omega$ is selective.
\end{example}

Indeed, given the theorem, we have the following interesting
corollary.

\begin{cor}
  Suppose there exists a field extension $L/K$ with $[L:K]=p$ which
  embeds into $B$, and which contains an order $\Omega \subseteq \O_L$
  which is selective.  Then $B\cong M_p(K)$.  Said alternatively,
  suppose we are given any number field $L/K$ of degree $p$, and any
  suborder $\Omega \subset \O_L$.  If $B$ is a degree $p$ division
  algebra, then $\Omega$ embeds into every maximal order in $B$ if and
  only if $L$ embeds into $B$. In particular, a degree $p$ division
  algebra admits no selective orders.
\end{cor}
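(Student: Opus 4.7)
The plan is to show that when $B$ is a division algebra of degree $p$, condition (1) of the Main Theorem, namely $L \subseteq K(\R)$, can never hold for any embedded degree $p$ extension $L$, so by the Main Theorem no $\Omega \subseteq \O_L$ is selective. The key mechanism is to produce a prime of $K$ which splits completely in $K(\R)$ but not in $L$.

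First, I would observe that since $[B:K] = p^2$ with $p$ prime, the Wedderburn decomposition forces $B$ to be either $M_p(K)$ or a division algebra: if $B \cong M_s(D)$ with $D$ division of dimension $(p/s)^2$ over $K$, then $s \in \{1,p\}$. Moreover, if $B$ splits at every prime of $K$, then $B$ represents the trivial element of the Brauer group and thus $B \cong M_p(K)$. Hence, whenever $B$ is a division algebra, the set $Ram(B)$ is nonempty; fix some $\nu_0 \in Ram(B)$.

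Next I would combine two observations already set up in the text. From the computation of $nr(\N(\R_\nu))$ at the start of Section~3, at a ramified prime $\nu_0$ we have $nr(\N(\R_{\nu_0})) = K_{\nu_0}^\times$. Consequently $K_{\nu_0}^\times \subseteq nr(\fN(\R)) \subseteq H_\R$, and the class field theory characterization recalled in the discussion of $K(\R)$ then tells us that $\nu_0$ splits completely in $K(\R)/K$. On the other hand, $L \subset B$ together with the generalized Albert-Brauer-Hasse-Noether theorem (stated in the introduction) demands $m_{\nu_0} = p$ divide $[L_\fP:K_{\nu_0}]$ for every prime $\fP$ of $L$ over $\nu_0$; since $[L:K] = p$ this forces a single prime $\fP$ above $\nu_0$ with $[L_\fP:K_{\nu_0}] = p$, so $\nu_0$ is inert (totally non-split) in $L$. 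Therefore $L \not\subseteq K(\R)$.

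With condition (1) of Theorem~\ref{thm:main} failing, the theorem guarantees that every maximal order of $B$ contains a $B^\times$-conjugate of $\Omega$, so $\Omega$ is not selective. This proves the first formulation: the existence of a selective $\Omega \subseteq \O_L$ with $L \subset B$ forces $B$ to be split, i.e.\ $B \cong M_p(K)$. For the reformulation, the ``only if'' direction is immediate, since an embedding of $\Omega$ into any maximal order of $B$ gives $L = \operatorname{Frac}(\Omega) \subset B$; the ``if'' direction is exactly the statement just proved. The main obstacle is really just the careful matching of two facts — that ramified primes are forced to split completely in the class field $K(\R)$, and that the same primes cannot split at all in an embedded degree $p$ extension — but both are essentially immediate from material already developed earlier in the paper, so I expect no serious difficulty.
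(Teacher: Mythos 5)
Your proposal is correct and follows essentially the same route as the paper: both arguments hinge on the observation that a prime $\nu \in Ram(B)$ satisfies $K_\nu^\times \subseteq H_\R$ and hence splits completely in $K(\R)$, while the Albert--Brauer--Hasse--Noether theorem forbids it from splitting in $L$, so $L \not\subseteq K(\R)$ and condition (1) of the main theorem fails. The paper phrases this as a contradiction starting from the assumption of selectivity, whereas you argue the contrapositive directly, but the content is identical.
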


\begin{proof}
  Given $L \subset B$ and $\Omega$ selective, we must have $L
  \subseteq K(\R)$.  Now $K(\R)$ is the class field associated to the
  subgroup $\ds H_\R = K^\times(J_K\cap [\prod_{\nu \in S_\infty\cup Ram(B)}
  K_\nu^\times \times \prod_{\nu\not\in S_\infty\cup Ram(B)}
  \O_\nu^\times (K_\nu^\times)^p])$.  In particular, if $\nu \in
  Ram(B)$, then $K_\nu^\times \subset H_\R$ which means that $\nu$
  splits completely in the class field $K(\R)$, hence in $L$.  But
  this violates the Albert-Brauer-Hasse-Noether theorem which implies
  that no prime that ramifies in $B$ splits in $L$.
\end{proof}

We give the proof of the main theorem via a sequence of propositions.

\begin{prop}\label{prop:Omega-1} Let $\Omega$ denote an
  $\O_K$-order which is an integral domain whose field of fractions
  $L$ is a degree $p$ extension of $K$ which is contained in $B$.  We
  assume that $\Omega$ is contained in a fixed maximal order $\R$ of
  $B$.  If $L \not\subset K(\R)$ then every isomorphism class of
  maximal order in $B$ contains a conjugate (by $B^\times$) of $\Omega$.
\end{prop}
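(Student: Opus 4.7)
The plan is to prove the stronger statement that $\Omega$ itself, viewed as a subset of $B$, is contained in each representative $\D^\gamma$ from the parametrization of Proposition~\ref{prop:parametrize}; since those $p^m$ orders exhaust the isomorphism classes of maximal orders in $B$, every class then contains $\Omega$ directly, which is trivially a $B^\times$-conjugate of itself. The hypothesis $L \not\subset K(\R)$ enters through Proposition~\ref{prop:splitting-properties}(2), which lets us choose the generating ideles $\{\overline e_{\nu_i}\}_{i=1}^m$ of $G_\R$ so that every $\nu_i$ splits completely in $L/K$. Such a $\nu_i$ automatically satisfies $B_{\nu_i} \cong M_p(K_{\nu_i})$, since a $K$-prime splitting completely in the embedded field $L$ forces $m_{\nu_i}=1$ via the Albert-Brauer-Hasse-Noether criterion recalled in the introduction, so the parametrization machinery applies at each of these primes.

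The key step is to choose the chamber $C_i$ appearing in the definition of $\D^\gamma$ intrinsically from the embedding $L\subset B$. Fix $\nu=\nu_i$. Complete splitting gives $L\otimes_K K_\nu \cong K_\nu^p$ and $(\O_L)_\nu \cong \O_\nu^p$. Writing $\R_\nu = \End_{\O_\nu}(\Lambda)$, the $p$ primitive idempotents of $(\O_L)_\nu\subseteq \R_\nu$ split $\Lambda = \O_\nu\omega_1 \oplus \cdots \oplus \O_\nu\omega_p$ into rank-one summands, and this realizes $(\O_L)_\nu$ as the diagonal $\O_\nu$-subalgebra of $B_\nu$ with respect to the basis $\{\omega_j\}$. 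Taking $C_i$ to be any chamber containing $\R_\nu$ inside the apartment of the affine building for $SL_p(K_\nu)$ determined by this basis is legitimate, since $\R_\nu$ is a vertex of that apartment by construction. Each vertex $\R_\nu^{(k)}$ of $C_i$ is then the homothety class of a lattice of the form $\O_\nu\pi^{a_1}\omega_1 \oplus \cdots \oplus \O_\nu\pi^{a_p}\omega_p$, whose endomorphism ring visibly contains the diagonal $\O_\nu$-matrices; that is, $(\O_L)_\nu \subseteq \R_\nu^{(k)}$ for every $k$.

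Consequently, for every $\gamma\in(\Z/p\Z)^m$ and every $i$ we have $\Omega_{\nu_i}\subseteq (\O_L)_{\nu_i}\subseteq \R_{\nu_i}^{(\gamma_i)}=\D^\gamma_{\nu_i}$, while at each prime $\nu\notin\{\nu_1,\dots,\nu_m\}$ the containment $\Omega_\nu\subseteq \R_\nu=\D^\gamma_\nu$ is immediate from $\Omega\subseteq\R$. The standard local-global description $\D^\gamma = B\cap\bigcap_\nu \D^\gamma_\nu$ then forces $\Omega\subseteq \D^\gamma$ globally for every $\gamma$, and invoking Proposition~\ref{prop:parametrize} finishes the proof. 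The only part with any real content is the intrinsic choice of basis $\{\omega_j\}$ and chamber $C_i$ at each $\nu_i$; once this is in place, the crucial containment $(\O_L)_{\nu_i}\subseteq \R_{\nu_i}^{(k)}$ drops out of a direct lattice-endomorphism computation, and the global conclusion is an application of the adelic local-global principle.
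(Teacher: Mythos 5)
Your overall strategy coincides with the paper's: use Proposition~\ref{prop:splitting-properties}(2) to pick generators $\overline e_{\nu_i}$ of $G_\R$ supported at primes splitting completely in $L$, observe that such primes split in $B$, arrange at each $\nu_i$ that $\Omega_{\nu_i}$ lies in every vertex of a chamber containing $\R_{\nu_i}$, and conclude via the parametrization of Proposition~\ref{prop:parametrize}. Where you differ is the local diagonalization step: the paper conjugates $L_{\nu_i}$ into the diagonal subalgebra of $M_p(K_{\nu_i})$ by the generalized Skolem--Noether lemma of Brzezinski and then appeals to the classification of maximal orders containing $\diag(\O_\nu,\dots,\O_\nu)$, whereas you split the lattice $\Lambda$ with $\R_\nu=\End_{\O_\nu}(\Lambda)$ directly by the primitive idempotents of $(\O_L)_\nu$. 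Your route is more self-contained, but it leans on the assertion $(\O_L)_\nu\subseteq\R_\nu$, and that is where the one real gap lies.

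That containment is not automatic: you only know $\Omega_\nu\subseteq\R_\nu$, and when $\nu$ divides $N_{L/K}(\f)$ the order $\R_\nu\cap L_\nu$ can be a proper suborder of $(\O_L)_\nu$ that omits the primitive idempotents. (Already in $M_2(K_\nu)$, the endomorphism ring of the lattice $\O_\nu(1,1)\oplus\O_\nu(0,\pi)$ meets the diagonal algebra in $\{(a,b)\in\O_\nu^2 : a\equiv b \bmod \pi\}$ and so misses the idempotent $(1,0)$; the same phenomenon occurs for every $p$.) Without the idempotents inside $\R_\nu$ your decomposition $\Lambda=\oplus\,\O_\nu\omega_j$ need not exist and $\R_\nu$ need not be a vertex of the apartment you construct. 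The repair is immediate, and it is the move the paper itself implicitly relies on when it rescales so that $\varphi(\R_\nu)=M_p(\O_\nu)$: Proposition~\ref{prop:generators} lets you choose the $\nu_i$ outside any prescribed finite set of primes, so additionally demand $\nu_i\nmid N_{L/K}(\f)$. Then $\Omega_{\nu_i}=(\O_L)_{\nu_i}\subseteq\R_{\nu_i}$, the idempotents do lie in $\R_{\nu_i}$, and the rest of your argument goes through verbatim. With that one sentence added, the proof is correct.
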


\begin{proof}
  Note that if the type number $t(\R) = 1$, the proposition is
  obviously true, so we assume $t(\R) = [K(\R):K] = p^m$ with $m \ge
  1$.  By Proposition~\ref{prop:splitting-properties}, we may choose
  elements $\{e_{\nu_1}, \dots, e_{\nu_m}\}\subset J_K$ so that the
  cosets $\{\overline e_{\nu_i}\}$ generate $G_\R = J_K/K^\times
  nr(\fN(\R))$, and so that the primes $\nu_i$ of $K$ are finite and
  split completely in $L$.  Since $[L:K] = p$, $L$ is a strictly
  maximal subfield of $B$ (section 13.1 of \cite{Pierce-book}) and
  consequently (Corollary 13.3 \cite{Pierce-book}), $L$ is a
  splitting field for $B$.  We claim that all the $\nu_i$ are split in
  $B$.  Fix $\nu = \nu_i$ and let $\fP$ be any prime of $L$ lying
  above $\nu$.  As $\nu$ splits completely in $L$, $[L_\fP :K_{\nu}] =
  1$. By Theorem 32.15 of \cite {Reiner-book}, $m_{\nu}$ which is the
  local index of $B_{\nu}/K_{\nu}$ must divide $[L_\fP :K_{\nu}]$,
  thus $B_{\nu} \cong M_p(K_{\nu})$, as desired.  Now, $L \subset
  B$ implies that $L\otimes_K K_{\nu} \cong \oplus_{\fP \mid \nu}
  L_\fP \cong K_{\nu}^p \hookrightarrow B\otimes_K K_{\nu} =
  B_{\nu}$. By a slight generalization of Skolem-Noether to
  commutative semisimple subalgebras of matrix algebras (Lemma 2.2 of
  \cite{Brzezinski-TwoThms}), we may assume we have a
  $K_{\nu}$-algebra isomorphism $\varphi: B_{\nu} \to M_p(K_{\nu})$
  such that \newline $\varphi(L)
  \subset
  \begin{pmatrix}
    K_{\nu}&&&0&\\&K_{\nu}&&\\&&\ddots&\\0&&&K_{\nu}\\
  \end{pmatrix}$ and hence $\varphi(\Omega) \subset \varphi(\O_L)
  \subset
  \begin{pmatrix}
    \O_{\nu}&&&0&\\&\O_{\nu}&&\\&&\ddots&\\0&&&\O_{\nu}\\
  \end{pmatrix}$.  By Corollary 2.3 of \cite{Shemanske-Split} all
  maximal orders containing $\diag(\O_{\nu}, \dots, \O_{\nu})$ have a
  prescribed form and lie in a fixed apartment in the affine building
  for $SL_p(K_\nu)$ and so it follows that by a rescaling of basis we
  may assume in addition that $\varphi(\R_\nu) = M_p(\O_\nu)$.

  With $\pi$ a uniformizer in $K_\nu$, let $\delta_k =
  \diag(\underbrace{\pi,\dots,\pi}_k,1,\dots,1)\in M_p(K_\nu)$, $k =
  0, \dots, p-1$, and define maximal orders $\E_k = \delta_k
  M_p(\O_\nu) \delta_k^{-1}$.  These are all maximal orders containing
  $\diag(\O_{\nu}, \dots, \O_{\nu})$, and are all the vertices of a
  fixed chamber in the building for $SL_p(K_\nu)$.  If we put
  $R_\nu^{(k)} = \varphi^{-1}(\E_k)$ for $k = 0, \dots p-1$, and $\nu
  \in \{\nu_1, \dots, \nu_m\}$ we then obtain a parametrization
  $\D^\gamma$ of the isomorphism classes of all maximal orders in $B$ as in
  Equation~(\ref{eqn:parametrization}).  Since $\Omega \subset \R$,
  and by construction $\Omega \subset  R_{\nu_i}^{(0)}
  \cap \cdots \cap R_{\nu_i}^{(p-1)}$ for each $\nu_i$, we have that
  $\Omega \subset \D_\nu^\gamma$ for all primes $\nu$ and all $\gamma$
  which is to say every isomorphism class of maximal order in $B$
  contains a conjugate of $\Omega$.
\end{proof}

Next we assume that condition (1) of the theorem holds, but
not condition (2). Note that since $L \subset K(\R)$ and $K(\R)/K$ is
  an everywhere unramified abelian extension, so is $L/K$.  Moreover, since
  $L/K$ is of prime degree (and Galois), any unramified prime splits completely
  or is inert.

\begin{prop}\label{Omega-2}
  Assume that $\Omega$ is an integral domain contained in $\R$ whose
  field of fractions $L \subset K(\R)$.  Assume that there is a prime
  $\nu$ of $K$ which divides $N_{L/K}(\f)$, the norm of the
  conductor $\f$ of $\Omega$, but which does not split completely in
  $L$.  Then every isomorphism class of maximal order in $B$ contains a
  conjugate of $\Omega$.
\end{prop}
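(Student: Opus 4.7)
The plan is to parallel the argument of Proposition \ref{prop:Omega-1}, replacing a ``generic'' inert generator by the given prime $\nu$. I first observe that $\nu$ must be split in $B$: if $\nu$ were ramified in $B$ then the computations at the start of Section \ref{sec:local_results} give $nr(\N(\R_\nu)) = K_\nu^\times$, so $K_\nu^\times \subseteq H_\R$ and $\nu$ splits completely in $K(\R)$; as $L \subseteq K(\R)$, $\nu$ would then split completely in $L$, contradicting the hypothesis. Since $L/K$ is an unramified cyclic extension of prime degree, a prime not splitting completely is inert, and in particular $\nu$ is inert in $L$.

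By Proposition \ref{prop:splitting-properties}(1), whose proof shows that any prime of $K$ inert in $L$ can serve as the inert generator, I choose generators $\{\overline e_{\nu_1}, \dots, \overline e_{\nu_m}\}$ of $G_\R$ with $\nu_1 = \nu$ and $\nu_2, \dots, \nu_m$ splitting completely in $L$. As in Proposition \ref{prop:Omega-1}, all the $\nu_i$ are split in $B$, and for each $i \geq 2$ the argument there produces a chamber $C_i$ in the affine building at $\nu_i$ containing $\R_{\nu_i}$ and whose vertices $\R_{\nu_i}^{(k)}$ ($k = 0, \dots, p-1$) all contain $\Omega_{\nu_i}$.

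The heart of the argument is the construction of an analogous chamber $C_1$ at $\nu_1 = \nu$. Because $\nu \mid N_{L/K}(\f)$ and $\nu$ is inert and unramified in $L$, the local order has the form $\Omega_\nu = \O_\nu + \pi^r \O_{L_\nu}$ for some $r \geq 1$, where $\pi$ is a common uniformizer of $K_\nu$ and $L_\nu$. Realize $V \cong L_\nu$ as a one-dimensional $L_\nu$-vector space; then the inclusion $\Omega_\nu \subset \R_\nu = \End_{\O_\nu}(\Lambda)$ forces $\Lambda$ to be $\Omega_\nu$-stable, and after an appropriate scaling we may arrange $\pi^r \O_{L_\nu} \subset \Lambda \subset \O_{L_\nu}$ (the lower containment follows by picking any $u \in \Lambda \cap \O_{L_\nu}^\times$ and noting $u \cdot \pi^r \O_{L_\nu} = \pi^r \O_{L_\nu} \subset u\Omega_\nu \subset \Lambda$). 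The $\Omega_\nu$-stable $\O_\nu$-sublattices of $\O_{L_\nu}$ containing $\pi^r \O_{L_\nu}$ then correspond to $(\O_\nu/\pi^r)$-submodules of the free rank-$p$ module $\O_{L_\nu}/\pi^r \O_{L_\nu}$, via the isomorphism $\Omega_\nu/\pi^r \O_{L_\nu} \cong \O_\nu/\pi^r$. Elementary module theory over the local Artinian ring $\O_\nu/\pi^r$ then yields a maximal chain of such submodules of successive index $q$ passing through $\Lambda/\pi^r \O_{L_\nu}$; lifting gives a chamber $C_1$ through $\R_\nu$ with all $p$ vertices containing $\Omega_\nu$.

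Finally, using the chambers $C_1, \dots, C_m$ to define the parametrization $\D^\gamma$ as in Equation~(\ref{eqn:parametrization}), we have $\Omega \subset \D^\gamma$ locally at every prime of $K$: at each generator prime $\nu_i$ because every vertex of $C_i$ contains $\Omega_{\nu_i}$, and at all other primes because $\D^\gamma$ agrees with $\R$ there. Hence $\Omega \subset \D^\gamma$ globally for every $\gamma \in (\Z/p\Z)^m$, and by Proposition \ref{prop:parametrize} every isomorphism class of maximal orders in $B$ contains a conjugate of $\Omega$. The main obstacle is the local module-theoretic step at $\nu$: verifying that a complete $\Omega_\nu$-stable chain of length $p$ can always be made to pass through the prescribed lattice $\Lambda$, which rests on the existence of complete $(\O_\nu/\pi^r)$-submodule flags through any given submodule of a free rank-$p$ module over $\O_\nu/\pi^r$.
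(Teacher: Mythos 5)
The opening reductions are fine ($\nu$ must split in $B$, hence is inert in $L$; it can serve as the inert generator $\nu_1$; the primes $\nu_i$, $i\ge 2$, are handled as in Proposition~\ref{prop:Omega-1}). The fatal gap is the assertion that ``the local order has the form $\Omega_\nu = \O_\nu + \pi^r\O_{L_\nu}$.'' This is the structure theorem for orders in \emph{quadratic} extensions and it fails for $p\ge 3$: with $\O_{L_\nu}=\O_\nu[\theta]$ unramified of degree $3$, the lattice $\O_\nu\oplus\pi\O_\nu\theta\oplus\pi^2\O_\nu\theta^2$ is an $\O_\nu$-order (closure under multiplication follows by reducing $\theta^3$ against the minimal polynomial), its conductor is $\pi^2\O_{L_\nu}$, and it equals $\O_\nu+\pi^r\O_{L_\nu}$ for no $r$; it globalizes to an order satisfying the hypotheses of the proposition. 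What is actually true, and what the proof genuinely requires, is only the \emph{containment} $\Omega\subseteq\O_K+\nu\O_L$. Establishing that containment is the real content of the proposition and is where the hypothesis $\nu\mid N_{L/K}(\f)$ must be used: one reduces to monogenic suborders $\O_K[a]$, invokes the conductor--different formula $\ff_{\O_K[a]/\O_K}=f'(a)\partial_{L/K}^{-1}=f'(a)\O_L$ (the different is trivial because $L\subseteq K(\R)$ forces $L/K$ unramified), deduces $f'(a)\equiv 0\pmod\nu$, and then uses separability of finite residue field extensions to conclude $\overline a$ cannot generate $\O_L/\nu\O_L$ over $\O_K/\nu\O_K$, hence lies in the prime subfield. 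None of this appears in your proposal; you have replaced the number-theoretic heart of the argument with a false normal form.

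There is a secondary problem in the step you yourself flag as the main obstacle. A maximal chain of $(\O_\nu/\pi^r)$-submodules of $(\O_\nu/\pi^r)^p$ has length $rp$, not $p$, and an arbitrary such chain does not lift to a chamber (a chamber requires a periodic chain $M_0\supset\cdots\supset M_{p-1}\supset\pi M_0$); moreover, forcing the chain to pass through the prescribed lattice $\Lambda$ of $\R_\nu$ while keeping every member above $\pi^r\O_{L_\nu}$ can be impossible (e.g.\ for $\Lambda=\O_\nu+\pi^r\O_{L_\nu}$ itself, the quotient $\Lambda/(\pi^r\O_{L_\nu}+\pi\Lambda)$ is one-dimensional, so no such length-$p$ chain below $\Lambda$ exists). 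The paper sidesteps this entirely: once $\Omega_\nu\subseteq\O_\nu+\pi\O_{L_\nu}$ is known, it builds the chamber around a maximal order $\D_\nu$ containing $\varphi(\O_{L_\nu})$ --- whose $p$ vertices $\delta_kM_p(\O_\nu)\delta_k^{-1}$ all visibly contain $\O_\nu+\pi\D_\nu$ --- rather than around $\R_\nu$; Proposition~\ref{prop:parametrize}(3) guarantees the resulting $p^m$ orders $\D^\gamma$ are pairwise non-isomorphic and hence still exhaust the isomorphism classes. You should adopt that route rather than insist the chamber contain $\R_{\nu}$.
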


\begin{proof}Since condition (2) is assumed not to hold, we may assume
  by the comments above that there is a prime $\nu$ of $K$ which
  divides $N_{L/K}(\f)$ and which is inert in $L$. Thus we may assume that
  $\nu\O_L \mid \f$. Our first goal is to show that $\Omega \subset
  \O_K + \nu \O_L$.

  We first assume that $\Omega$ has the form $\Omega = \O_K[a]$ for
  some $a \in \O_L$, and let $f$ be the minimal polynomial of $a$ over
  $K$.  Since $\Omega \otimes_{\O_K} K \cong  L$, $f$ is irreducible of degree
  $p$, and since $a$ is integral, $f \in \O_K[x]$.  By Proposition
  4.12 of \cite{Narkiewicz-book}, $\f = f'(a)\partial_{L/K}^{-1} =
  f'(a)\O_L$ since $L/K$ everywhere unramified implies that the
  different $\partial_{L/K} = \O_L$. So it follows that $f'(a) \equiv 0
  \pmod \nu.$ Put $\overline a = a + \nu \O_L$ and consider the tower
  of fields:
\[
O_K/\nu\O_K \subseteq \O_K/\nu\O_K[\overline a] \subseteq \O_L/\nu\O_L.
\]
 From top to bottom, this is a degree $p$ extension of finite fields
 since $\nu$ is inert in $L$, and the ring in the middle is a field
 since it is a finite integral domain.  Since the total extension has
 prime degree, there are two cases.

If $\O_K/\nu\O_K[\overline a] = \O_L/\nu\O_L$, then $\overline f$ (the
reduction of $f$ mod $\nu\O_K$) is irreducible and hence is the minimal
polynomial of $\overline a$. In particular $\overline f$ must be
separable polynomial since finite fields are perfect.  On the other
hand, $\overline f$ and $\overline{f'}$ share the common root
$\overline a$, so $\overline f$ is not separable, a contradiction.

Thus $\O_K/\nu\O_K[\overline a] = \O_K/\nu\O_K$ where we view
$\O_K/\nu\O_K$ embedded as usual in $\O_L/\nu\O_L$.  Thus $\overline a
= a + \nu \O_L \in \O_K/\nu\O_K$ which means that $a + \nu \O_L = b + \nu
\O_L$ for some $b \in \O_K$.  This means that $a \in b + \nu\O_L$
which in turn means that $\Omega = \O_k[a] \subset \O_K + \nu\O_L$.

Now consider the general case of an order $\Omega$.  We show $\Omega
\subset \O_K + \nu\O_L$ by showing each element of $\Omega$ is in
$\O_K + \nu\O_L$.  Choose $a \in \Omega$.  Without loss assume $a
\notin \O_K$.  Then $\O_K[a]$ is an integral domain whose field of
fractions is all of $L$ since $L/K$ has prime degree.  Moreover, $\f
\mid \mathfrak f_{\O_K[a]/\O_K}$, so we may use the same inert prime
$\nu$ for all elements of $\Omega$, and the special case now implies
the general result.

By Proposition~\ref{prop:splitting-properties} (and its proof), we may
choose primes $\nu_1, \dots, \nu_m$ of $K$ so that the $\{\overline
e_{\nu_i}\}$ generate $G_\R$, where $\nu_i$ splits completely in $L$
for $i > 1$ and where $\nu_1$ is inert in $L$.  Consider the situation
locally at $\nu=\nu_1$.  We have that $\Omega_\nu \subset \O_{\nu} +
\nu\O_{L_\nu} \subset \O_{L_\nu}$. As in the previous proposition, we
have a $K_\nu$-algebra isomorphism $\varphi: B_\nu \to
M_p(K_\nu)$. Let $\D_\nu$ be a maximal order in $M_p(K_\nu)$
containing $\varphi(\O_{L_\nu})$ and hence $\varphi(\Omega)$.  Since
all maximal orders in $M_p(K_\nu)$ are conjugate, writing $\D_\nu
= \End_{\O_{\nu}}(\Lambda_\nu)$ for some $\O_\nu$-lattice
$\Lambda_\nu$, we may assume that $\varphi$ is defined so that $\D_\nu
= M_p(\O_\nu)$.  As in the previous proposition, let $\delta_k =
\diag(\underbrace{\pi,\dots,\pi}_k,1,\dots,1)\in M_p(K_\nu)$, $k = 0,
\dots, p-1$, and define maximal orders $\D_\nu^{(k)} = \delta_k
M_p(\O_\nu) \delta_k^{-1}$.  One trivially checks that $\nu \D_\nu \in
\D_\nu^{(k)}$ for $k = 0, \dots, p-1$, so that $\varphi(\Omega)
\subset \varphi (\O_\nu + \nu \O_{L_\nu}) \subset \O_\nu + \nu \D_\nu
\subset \D_\nu^{(k)}$ for $k=0, \dots , p-1$.  Putting $\R_\nu^{(k)} =
\varphi^{-1}(\D_\nu^{(k)})$, we have $\Omega \subset \R_\nu^{(k)}$ for
$k= 0, \dots, p-1$, and we may use these $\R_\nu^{(k)}$ as part of the
parametrization of the isomorphism classes of maximal orders.  The
other primes $\nu_2, \dots, \nu_m$ all split completely in $L$, and
the previous proposition shows that $\Omega$ is contained in all the
local factors of our parametrization.  So as before, $\Omega$ is
contained in every isomorphism class of maximal order in $B$.
\end{proof}

Finally, we assume that conditions (1) and (2) hold, and show that
$\Omega$ is contained in only one-$p$th of the isomorphism classes of $\R$.
We require a small technical lemma.

\begin{lemma} As above, let $\Omega$ denote an $\O_K$-order which is an
  integral domain whose field of fractions $L$ is a cyclic extension
  of $K$ having prime degree $p$.  We assume that $L$ is contained in
  $B$, and let $\nu$ be a prime of $K$ which is inert in $L$.  If $\nu
  \nmid N_{L/K}(\f$), then there exists an $a \in \Omega\setminus \O_K$ so that
  $\nu \nmid N_{L/K}(\ff_{\O_K[a]/\O_K})$.
\end{lemma}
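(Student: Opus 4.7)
The plan is to construct $a$ by first prescribing its residue class modulo $\nu$ inside $\Omega_\nu$, then lifting back to $\Omega$. The starting point is that $\nu\nmid N_{L/K}(\f)$ forces $\f_\nu=\O_{L_\nu}$, hence $\Omega_\nu=\O_{L_\nu}$. Since $\nu$ is inert in $L$, $L_\nu/K_\nu$ is the unramified degree-$p$ extension, and the residue extension $\O_{L_\nu}/\nu\O_{L_\nu}$ over $\O_\nu/\nu\O_\nu$ is a field extension of prime degree $p$. Because $p$ is prime, this extension has no intermediate subfield, so any element of the upper residue field that fails to lie in the lower one automatically generates the extension.

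First I would pick $\bar{a}\in\O_{L_\nu}/\nu\O_{L_\nu}$ outside $\O_\nu/\nu\O_\nu$. Next, from the canonical identification
\[
\Omega/\nu\Omega \;\cong\; \Omega\otimes_{\O_K}\O_\nu/\nu\O_\nu \;=\; \Omega_\nu/\nu\Omega_\nu \;=\; \O_{L_\nu}/\nu\O_{L_\nu},
\]
the reduction map $\Omega\to\O_{L_\nu}/\nu\O_{L_\nu}$ is surjective, so $\bar a$ lifts to some $a\in\Omega$. Because $\bar a\notin \O_\nu/\nu\O_\nu$ and $\O_K\subset\O_\nu$, this $a$ cannot lie in $\O_K$, giving $a\in\Omega\setminus\O_K$ for free.

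To conclude, I would show $\O_\nu[a]=\O_{L_\nu}$ by Nakayama's lemma: the image of $a$ generates the residue extension over $\O_\nu/\nu\O_\nu$, so $\O_\nu[a]+\nu\O_{L_\nu}=\O_{L_\nu}$; since $\O_{L_\nu}$ is finitely generated as a module over the local ring $\O_\nu$, Nakayama upgrades this to the equality $\O_\nu[a]=\O_{L_\nu}$. Localizing at $\nu$ then gives $(\O_K[a])_\nu=\O_{L_\nu}$, so the conductor $\ff_{\O_K[a]/\O_K}$ localizes at $\nu$ to the unit ideal of $\O_{L_\nu}$. Hence $\nu\nmid\ff_{\O_K[a]/\O_K}$, and in particular $\nu\nmid N_{L/K}(\ff_{\O_K[a]/\O_K})$.

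No step is especially subtle; the main thing to get right is the identification $\Omega/\nu\Omega\cong\O_{L_\nu}/\nu\O_{L_\nu}$, which depends squarely on the conductor hypothesis $\f_\nu=\O_{L_\nu}$. Once surjectivity of the reduction map is in hand, the combination of ``prime degree makes residue generation automatic for any non-base element'' with ``Nakayama upgrades residue generation to genuine equality'' produces the required $a$ with essentially no further work.
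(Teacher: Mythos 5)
Your proof is correct, and while it shares the paper's central idea---use the conductor hypothesis to produce $a \in \Omega$ whose image in the residue field at $\nu$ lies outside the base residue field, then exploit the fact that a degree-$p$ extension of finite fields has no intermediate fields---the execution differs at both ends. To produce $a$, the paper argues by contradiction: if every element of $\Omega$ reduced into $\O_K/\nu\O_K$ then $\Omega \subseteq \O_K + \nu\O_L$, and a direct computation shows $\ff_{(\O_K+\nu\O_L)/\O_K} = \nu\O_L$, forcing $\nu\O_L \mid \f$, contrary to hypothesis. You instead localize, observe that $\f$ prime to $\nu$ gives $\Omega_\nu = \O_{L_\nu}$, and lift a generator of the residue extension through the surjection $\Omega \twoheadrightarrow \O_{L_\nu}/\nu\O_{L_\nu}$; this produces the element directly rather than by contradiction. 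At the other end, the paper passes from ``$a$ generates the residue extension'' to ``$\nu \nmid N_{L/K}(\ff_{\O_K[a]/\O_K})$'' by citing Proposition 4.7 of Narkiewicz, whereas you prove that implication from scratch: $\O_\nu[a] + \nu\O_{L_\nu} = \O_{L_\nu}$ upgrades via Nakayama to $\O_\nu[a] = \O_{L_\nu}$, so the conductor of $\O_K[a]$ is trivial at $\nu$. Your version is more local and self-contained (at the cost of invoking the compatibility of conductors with localization, which is standard); the paper's is shorter on the page because it outsources that step. Both arguments correctly use that $\nu$ being inert makes $\nu \nmid N_{L/K}(\f)$ equivalent to $\nu\O_L \nmid \f$, and both get $a \notin \O_K$ for free from the nontrivial residue class.
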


We remark that this lemma represents a statement that in this narrow
context $\Omega$ has no common non-essential discriminantal divisors,
see \cite{Narkiewicz-book}, a frequent obstruction to assuming the an
order has a power basis.

\begin{proof} First note that since $\nu$ is inert in $L$, the stated
  condition on the conductor $\f$ is equivalent to $\nu\O_L \nmid \f$.
  Let $a \in \Omega$, and consider the tower of fields (the quotient
  ring in the middle being a finite integral domain):
\[\xymatrix{\O_K/\nu\O_K\ar@{^{(}->}[r]&  (\O_K/\nu\O_K)[a + \nu\O_L] \ar@{^{(}->}[r]&
 \O_L/\nu\O_L}.
 \]
Since $\nu$ is inert in $L$, $[\O_L/\nu\O_L : \O_K/\nu\O_K] = p$, so the
field in the middle coincides with one of the ends.  If $(\O_K/\nu\O_K)[a
+ \nu\O_L] = \O_K/\nu\O_K$, then $a +\nu\O_L = b + \nu \O_L$ for some $b
\in \O_K$, hence $\O_K[a] \subset \O_K + \nu\O_L$.  If this happens
for each $a \in \Omega$, then $\Omega \subset \O_K + \nu\O_L$.
Consider the conductors of these orders: Certainly, $\ff_{(\O_K +
  \nu\O_L)/\O_K} \mid \f$, and $\nu\O_L \subset \ff_{(\O_K +
  \nu\O_L)/\O_K} = \{ x \in \O_L \mid x\O_L \subseteq \O_K +
  \nu\O_L\}.$  But as $\nu$ is inert in $L$, $\nu\O_L$ is a maximal
  ideal, and since $\O_K + \nu\O_L \ne \O_L$, $\ff_{(\O_K +
  \nu\O_L)/\O_K} \ne \O_L$, so $\ff_{(\O_K +
  \nu\O_L)/\O_K} = \nu\O_L$.  This implies $\nu\O_L \mid \f$, a
contradiction.

So there must exist an $a \in
\Omega\setminus \O_K$ so that $a \not\in \O_K + \nu\O_L$. This implies
$\O_K[a]/(\nu\O_L \cap \O_K[a]) \not\cong (\O_K/\nu\O_K)$, so we have
$\O_L/\nu\O_L \cong \O_K[a]/(\nu\O_L \cap \O_K[a])$.  By Proposition
4.7 of \cite{Narkiewicz-book}, $\nu \nmid
N_{L/K}(\ff_{\O_K[a]/\O_K})$, as required.
\end{proof}

\begin{prop} Suppose now that conditions (1) and (2) hold.  Then
  precisely one-$p$th of the isomorphism classes of maximal orders in $B$
  contain a conjugate of $\Omega$.  Those classes are characterized by
  means of the Frobenius $\Frob_{L/K}$ as follows: $\E$ is a maximal
  order which contains a conjugate if and only if
  $\Frob_{L/K}(\rho(\R, \E))$ is trivial in $Gal(L/K)$.
\end{prop}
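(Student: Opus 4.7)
The plan is to apply the parametrization of Proposition~\ref{prop:parametrize} with a carefully chosen set of generators of $G_\R$, and to identify which $\D^\gamma$ contain $\Omega$.  Since $L \subseteq K(\R)$ by hypothesis (1), Proposition~\ref{prop:splitting-properties}(1) lets me select generators $\{\overline{e_{\nu_i}}\}_{i=1}^m$ with $\nu_1$ inert in $L$ and each $\nu_i$ (for $i > 1$) splitting completely in $L$.  Observe that $L/K$ is a subextension of the everywhere unramified $K(\R)/K$, hence itself unramified; as $[L:K] = p$ is prime, each prime of $K$ is either totally split or inert in $L$.

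The crucial local step will be at the inert prime $\nu_1$.  Hypothesis (2) forces $\nu_1 \nmid N_{L/K}(\f)$, so $\Omega_{\nu_1} = \O_{L_{\nu_1}}$, the ring of integers of the unramified degree-$p$ extension $L_{\nu_1}/K_{\nu_1}$.  I will argue that $\O_{L_{\nu_1}}$ is contained in a \emph{unique} maximal order of $B_{\nu_1} \cong M_p(K_{\nu_1})$: under the embedding $L_{\nu_1} \hookrightarrow B_{\nu_1}$ the space $V = K_{\nu_1}^p$ becomes one-dimensional over $L_{\nu_1}$, so any $\O_{L_{\nu_1}}$-stable $\O_{\nu_1}$-lattice in $V$ is free of rank one as an $\O_{L_{\nu_1}}$-module, and any two such lattices differ by scalar multiplication by an element of $L_{\nu_1}^\times$.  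Because $L_{\nu_1}/K_{\nu_1}$ is unramified, a uniformizer of $K_{\nu_1}$ is also a uniformizer of $L_{\nu_1}$, so all such lattices are $K_{\nu_1}^\times$-homothetic and represent a single vertex in the building, necessarily $\R_{\nu_1}$ itself.  Choosing any chamber containing $\R_{\nu_1}$ to define $\R_{\nu_1}^{(k)}$ with $\R_{\nu_1}^{(0)} = \R_{\nu_1}$, one then has $\Omega_{\nu_1} \subset \R_{\nu_1}^{(k)}$ if and only if $k = 0$.

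At each split prime $\nu_i$ with $i > 1$, the argument from the proof of Proposition~\ref{prop:Omega-1} arranges the parametrization so that $\Omega_{\nu_i} \subset \R_{\nu_i}^{(k)}$ for every $k = 0, \dots, p-1$.  Combined with $\D_\nu^\gamma = \R_\nu$ outside $\{\nu_1, \dots, \nu_m\}$, this yields $\Omega \subset \D^\gamma$ precisely when $\gamma_1 = 0$, producing $p^{m-1}$ of the $p^m$ isomorphism classes that contain (a literal copy of, hence a conjugate of) $\Omega$.  For the converse, suppose the isomorphism class of $\D^\gamma$ contains a conjugate of $\Omega$, so that $\Omega \subset \E$ for some $\E \cong \D^\gamma$; then at each inert prime $\nu$, $\E_\nu \supset \Omega_\nu = \O_{L_\nu}$, and uniqueness forces $\E_\nu = \R_\nu$, so $td_\nu(\R, \E) = 0$ at every inert prime.

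To deduce $\gamma_1 = 0$ from this vanishing and simultaneously obtain the Frobenius characterization, I use that the composition $G_\R \twoheadrightarrow Gal(K(\R)/K) \twoheadrightarrow Gal(L/K)$ is the Artin map $\Frob_{L/K}$.  By unramified local class field theory, $\Frob_{L/K}(\overline{e_{\nu_i}}) = (\nu_i, L/K)$, which is trivial for $i > 1$ (split) and generates $Gal(L/K)$ for $i = 1$ (inert); applied to the idele representative $(\pi_\nu^{td_\nu(\R, \E)})$ of $\rho(\R, \E)$, only inert primes contribute, so the vanishing of $td_\nu(\R, \E)$ at each inert prime forces $\Frob_{L/K}(\rho(\R, \E)) = 1$.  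Combined with $\rho(\R, \E) = \rho(\R, \D^\gamma)$ (Proposition~\ref{prop:parametrize}(2), since $\E \cong \D^\gamma$) and the computation $\Frob_{L/K}(\rho(\R, \D^\gamma)) = (\nu_1, L/K)^{\gamma_1}$, this forces $\gamma_1 = 0$, closing the counting argument and giving the Frobenius characterization.  The main obstacle is establishing the uniqueness of the maximal order containing $\O_{L_{\nu_1}}$; once this is in hand via the one-dimensional $L_{\nu_1}$-vector space structure on $V$, the remaining steps amount to bookkeeping through the parametrization and unramified class field theory.
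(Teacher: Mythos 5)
Your proof is correct, but the decisive step is handled quite differently from the paper's. Both arguments share the same skeleton: choose generators $\{\overline e_{\nu_i}\}$ via Proposition~\ref{prop:splitting-properties}(1) with $\nu_1$ inert and $\nu_i$ ($i>1$) split, run the chamber parametrization $\D^\gamma$, handle the split primes exactly as in Proposition~\ref{prop:Omega-1}, and convert ``$\gamma_1=0$'' into the Frobenius condition through $\rho$ and Artin reciprocity. The difference is in why a nontrivial type distance at an inert prime obstructs an embedding. The paper invokes the technical lemma to produce $a\in\Omega\setminus\O_K$ with $\nu\nmid N_{L/K}(\ff_{\O_K[a]/\O_K})$, applies Dedekind--Kummer to see that the reduction of the minimal polynomial of $a$ mod $\nu$ is irreducible, and derives a contradiction from the block-triangular shape of $\R_\nu\cap\E^*_\nu$ modulo $\nu$, which would force the characteristic polynomial to factor. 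You instead observe that condition (2) makes $\Omega$ \emph{locally maximal} at every inert prime ($\Omega_\nu=\O_{L_\nu}$ since $\nu\nmid N_{L/K}(\f)$), and that the ring of integers of the unramified degree-$p$ extension stabilizes a unique homothety class of lattices in $V$ (rank-one free $\O_{L_\nu}$-modules, all $K_\nu^\times$-homothetic because a uniformizer of $K_\nu$ uniformizes $L_\nu$), hence lies in a unique maximal order of $B_\nu$; this pins $\E_\nu=\R_\nu$ at every inert prime split in $B$ (inert primes ramified in $B$ contribute $td_\nu=0$ by convention). Your route is shorter and bypasses both the lemma on non-essential discriminant divisors and the Dedekind--Kummer/characteristic-polynomial computation, and it isolates exactly where hypothesis (2) is used; the paper's argument, by working with an arbitrary element $a\in\Omega$ of coprime local conductor rather than with local maximality, is the version that parallels Chinburg--Friedman and exposes the building-theoretic structure of $\R_\nu\cap\E_\nu^*$. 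One small point worth making explicit in your write-up: the $\nu_i$, including $\nu_1$, should be chosen split in $B$ (Proposition~\ref{prop:generators} permits avoiding the finitely many ramified primes), so that the building at $\nu_1$ is available.
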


\begin{remark}
  First we indicate our meaning of $\Frob_{L/K}(\rho(\R, \E))$.
  Recall that $\rho(\R, \E) \in G_\R = J_K/H_\R$, and by Artin
  reciprocity, there is an isomorphism $G_\R \to Gal(K(\R)/K)$ given
  by the Artin map which we denote here as $\Frob_{K(\R)/K}$.  Thus
  $\Frob_{k(\R)/K}(\rho(\R, \E))$ is an element of $Gal(K(\R)/K)$
  which we restrict to $L$.  The Artin map is also compatible with
  restriction giving that $\Frob_{k(\R)/K}(\rho(\R, \E))|_L =
  \Frob_{L/K}(\rho(\R, \E))$.
\end{remark}

\begin{proof}
  We have assumed that $\Omega \subset \R$, and suppose that $\E$ is
  another maximal order in $B$.  We shall show that $\E$ contains a
  conjugate of $\Omega$ if and only if $Frob_{L/K}(\rho(\R, \E))$ is
  trivial in $Gal(L/K)$.  We first show that $Frob_{L/K}(\rho(\R,
  \E))$ non-trivial in $Gal(L/K)$ implies that $\E$ does not contain a
  conjugate of $\Omega$.  We proceed by contradiction and assume that
  $\E$ does contain a conjugate of $\Omega$.  Then there is $b \in
  B^\times$ so that $\Omega \subset \E^* = b\E b^{-1}$.  By
  Proposition~\ref{prop:parametrize}, $Frob_{L/K}(\rho(\R,\E)) =
  Frob_{L/K}(\rho(\R, \E^*)) \ne (\bar 1)$, so there exists a prime $\nu$ of
  $K$ which is inert in $L$ so that $td_\nu(\R_\nu, \E^*_\nu)
  \not\equiv 0 \pmod p$.  In particular, $\R_\nu$ and $\E^*_\nu$ are
  of different types, so indeed $\R_\nu \ne \E_\nu^*$.  View these two
  maximal orders as vertices in the building for $SL_p(K_\nu)$, and
  choose an apartment containing them.  We may assume that a basis
  $\{\omega_i\}$ for the apartment is chosen in such a way that one
  maximal order is $\End_{O_\nu}(\oplus \O_\nu \omega_i)$ which we
  identify with $M_p(\O_\nu)$ and the other with $\End_{O_\nu}(\oplus
  \O_\nu \pi^{m_i}\omega_i)$ ($0\le m_1\le \cdots \le m_p$), which is
  identified with $\diag(\pi^{m_1}, \dots,
  \pi^{m_p})M_p(\O_\nu)\diag(\pi^{m_1}, \dots, \pi^{m_p})^{-1} =
  \Lambda(m_1, \dots, m_p) = \begin{pmatrix}
    \O&\nu^{m_1 - m_2}&\nu^{m_1-m_3}& \dots&\nu^{m_1-m_p}\\
    \nu^{m_2-m_1}&\O&\nu^{m_2-m_3}&\dots&\nu^{m_2 - m_p}\\
    \nu^{m_3-m_1}&\nu^{m_3-m_2}&\ddots&\dots&\nu^{m_3 - m_p}\\
    \vdots&\vdots&&\O&\vdots\\
    \nu^{m_p - m_1}&\dots&&\nu^{m_{p} - m_{p-1}}&\O
  \end{pmatrix}
  $.  We may assume without loss that $m_1 = 0$ since $\End(L)$ is
  unchanged by the homothety class of the lattice $L$, and since
  $\R_\nu \ne \E^*_\nu$, we must have that $m_p \ge 1$.  Let $\ell$ be
  the smallest index so that $m_\ell \ge 1$.  Note that the image of 
  $M_p(\O_\nu) \cap \Lambda(m_1, \dots, m_p)$ under the projection
  from $M_p(\O_\nu) \to M_p(\O_\nu/\nu\O_\nu)$ is contained in
  $\begin{pmatrix}
    M_{\ell -1}(\O_\nu/\nu\O_\nu)&*\\ 0&M_{p-\ell+1}(\O_\nu/\nu\O_\nu)
  \end{pmatrix}$.

  By the lemma, we can choose an element $a \in \Omega\setminus \O_K$,
  with $\nu \nmid N_{L/K}(\mathfrak f_{\O_K[a]/\O_K})$, and since
  $L/K$ has prime degree, $L$ is the field of fractions of $\O_K[a]$.
  This allows us to invoke the Dedekind-Kummer theorem (Theorem 4.12
  of \cite{Narkiewicz-book}).  Let $f$ be the minimal polynomial of
  $a$ over $K$.  Because $L/K$ has prime degree and $a$ is integral,
  $f \in \O_K[x]$ and is irreducible.  Since Dedekind-Kummer applies,
  we consider the factorization of $\overline f \in (\O_K/\nu\O_K)[x]$
  which will mirror the factorization of $\nu$ in the field $L$.  Of
  course we know that $\nu$ is inert, so that $\overline f$ is
  irreducible in $(\O_K/\nu\O_K)[x]$.  Now since $L \subset B$, we can
  view $a \in B_\nu \cong M_p(K_\nu)$.  Without loss we identify
  $B_\nu$ with the matrix algebra.  Let $F$ be the characteristic
  polynomial of $a$ over $K_\nu$ which, because $a$ is integral, will
  have coefficients in $\O_\nu[x]$. Consider $\overline F \in
  (\O_\nu/\nu\O_\nu)[x] \cong (\O_K/\nu\O_K)[x]$.  Now both $\overline
  f$ and $\overline F$ are polynomials of degree $p$ in
  $(\O_K/\nu)[x]$ having $a$ for a root.  We know that $\overline f$
  is irreducible, so $\overline f \mid \overline F$, from which it
  follows that $\overline F = \overline f$ by degree considerations,
  and hence is irreducible. On the other hand, since $a \in R_\nu \cap
  \E^*_\nu$ we know that its image under the projection from
  $M_p(\O_\nu) \to M_p(\O_\nu/\nu\O_\nu)$ lies in
  $\begin{pmatrix} M_{\ell -1}(\O_\nu/\nu\O_\nu)&*\\
    0&M_{p-\ell+1}(\O_\nu/\nu\O_\nu)
  \end{pmatrix}$ which means that $\overline F$ (the characteristic
  polynomial of $a$) will be reducible over $\O_\nu/\nu\O_\nu$ by the
  inherent block structure, a contradiction.

  Now we show the converse: Recall that $\Omega \subset \R$, and let
  $\E$ be another maximal order in $B$.  We shall show that if
  $Frob_{L/K}(\rho(\R, \E))$ is trivial in $Gal(L/K)$ then $\E$
  contains a conjugate of $\Omega$. By
  Proposition~\ref{prop:splitting-properties}, we may choose primes
  $\nu_1, \dots, \nu_m$ of $K$ so that the $\{\overline e_{\nu_i}\}$
  generate $G_\R$, where $\nu_i$ splits completely in $L$ for $i > 1$
  and where $\nu_1$ is inert in $L$.  Parametrize the isomorphism
  classes of maximal orders as
  in Equation~(\ref{eqn:parametrization}), using $\R$ and in each
  completion $B_{\nu_i}$ assigning the types by using the vertices in
  a fixed chamber containing $\R_{\nu_i}$ in the $SL_p(K_\nu)$
  building.  Thus every maximal order is isomorphic to exactly one
  order $\D^\gamma$, for $\gamma \in (\Z/p\Z)^m$. We have $\R =
  \D^{(0)}$.  Let $\gamma$ be fixed with $\E \cong \D^\gamma$.  To
  establish our claim, we need only show that $\Omega \subset
  D^\gamma$. By Proposition~\ref{prop:parametrize}, $\rho(\R,\E) =
  \rho(\R, \D^\gamma)$ so $\Frob_{L/K}(\rho(\R, \D^\gamma)) = 1$.
  Recall that $\D_\nu^\gamma = R_\nu$ for all $\nu \ne \nu_i$ and the
  primes $\nu_2, \dots, \nu_m$ all split completely in $L$. Thus
  $\Frob_{L/K}(\rho(\R, \D^\gamma)) =
  \Frob_{L/K}(\nu_1^{td_{\nu_1}(\R_{\nu_1}, \D^\gamma_{\nu_1})}) = 1$.
  Since $\Frob_{L/K}$ has order $p$ in $Gal(L/K)$, we have that
  $td_{\nu_1}(\R_{\nu_1}, \D^\gamma_{\nu_1}) \equiv 0 \pmod p$.  But
  given that the parametrization used the vertices in a fixed chamber
  of the building, this is only possible if $D^\gamma_{\nu_1} =
  R_{\nu_1}$, so of course $\Omega \subset D^\gamma_{\nu_1}$.  That
  $\Omega \subset D^\gamma_{\nu_i}$ for $i = 2, \dots, m$ follows in
  exactly the same way as in Proposition~\ref{prop:Omega-1}.  Finally
  for $\nu \ne \nu_i$, $\Omega \subset \R_\nu = \D_\nu^\gamma$.  Thus
  $\Omega \subset \D_\nu^\gamma$ for all primes $\nu$, and the
  argument is complete.
\end{proof}

% ------------------------------------------- Begin References
% -------------------------------------------

\bibliographystyle{amsplain} 
%\bibliography{shemanske}

\providecommand{\bysame}{\leavevmode\hbox to3em{\hrulefill}\thinspace}
\providecommand{\MR}{\relax\ifhmode\unskip\space\fi MR }
% \MRhref is called by the amsart/book/proc definition of \MR.
\providecommand{\MRhref}[2]{%
  \href{http://www.ams.org/mathscinet-getitem?mr=#1}{#2} }
\providecommand{\href}[2]{#2}

\end{document}